\documentclass{ps}

\usepackage[utf8]{inputenc}
\usepackage[T1]{fontenc}
\usepackage{textcomp}
\usepackage{dsfont}
\usepackage{hyperref}
\usepackage{color}
\usepackage{graphicx}
\usepackage{epstopdf}
\usepackage{amsmath,amsopn,amscd,amsthm,amssymb}
\usepackage{enumitem}
\usepackage{here}
\usepackage[dvipsnames]{xcolor}
\usepackage{comment}

\usepackage{euscript}
\usepackage[final]{pdfpages}
\usepackage[utf8]{inputenc}
\usepackage[T1]{fontenc}

\def \R{\mathbb{R}}
\def\C{\mathbb{C}}
\def\E{\mathbb{E}}
\def\P{\mathbb{P}}

\def\cL{\mathcal{L}}

 \usepackage{amsmath}
 \usepackage{amsfonts}

\newtheorem{Remark}{Remark}
\newtheorem{proposition}{Proposition}

\newtheorem{coro}{Corollary}
\newtheorem{conj}{Conjecture}
\newtheorem{thm}{Theorem}

\usepackage{graphicx}

\def \1{\mathds{1}}

\begin{document}
%%-----------------------------
%%      the top matter
%%-----------------------------
\title{A probabilistic point of view for the Kolmogorov hypoelliptic equations}
%\thanks{...}\thanks{...}% At most 5 thanks
%
\author{P. Etor\'{e}}\address{Univ. Grenoble Alpes, CNRS, Inria, Grenoble INP, LJK, Grenoble, France
E-mail \texttt{pierre.etore@univ-grenoble-alpes.fr}}
\author{J. R. Le\'{o}n}\address{Univ. de La República. IMERL. Montevideo, Uruguay, Escuela de Matemática UCV. Venezuela
E-mail \texttt{rlramos@fing.edu.uy}}
\author{C. Prieur}\address{Univ. Grenoble Alpes, CNRS, Inria, Grenoble INP, LJK, Grenoble, France E-mail\texttt{ clementine.prieur@univ-grenoble-alpes.fr}}
%
%\date{}
%
\begin{abstract}
In this work, we propose a method for solving Kolmogorov hypoelliptic equations based on Fourier transform and  Feynman-Kac formula. We first explain how the Feynman-Kac formula can be used to compute the fundamental solution to parabolic equations with linear or quadratic potential. Then applying these results after a Fourier transform we deduce the computation of the solution to a first class of Kolmogorov hypoelliptic equations. Then we solve partial differential equations obtained via Feynman-Kac formula from the Ornstein-Uhlenbeck generator. Also, a new small time approximation of the solution to a certain class of Kolmogorov hypoelliptic equations is provided. We finally present the results of numerical experiments to check the practical efficiency of this approximation.\\
\begin{center}{\bf R\'esum\'e}\end{center}
Dans ce travail, nous proposons une méthode de résolution des équations hypoelliptiques de Kolmogorov basée sur la transformée de Fourier et la formule de Feynman-Kac. Nous expliquons d'abord comment la formule de Feynman-Kac peut être utilisée pour calculer la solution fondamentale des équations paraboliques à potentiel linéaire ou quadratique. Puis en appliquant ces résultats après une transformée de Fourier, nous déduisons le calcul de la solution d'une première classe d'équations hypoelliptiques de Kolmogorov. Ensuite, nous résolvons des équations aux dérivées partielles obtenues via la formule de Feynman-Kac à partir du générateur d'Ornstein-Uhlenbeck. De plus, une nouvelle approximation en temps petit de la solution d'une certaine classe  d' équations hypoelliptiques de Kolmogorov est \'etablie. Nous présentons enfin les résultats d'expériences numériques pour vérifier l'efficacité pratique de cette approximation.
\end{abstract}

\subjclass{60H30, 60H10, 60J35}
\keywords{Hypoelliptic Kolmogorov equation, Feynman-Kac formula, Asymptotic expansion.}

\maketitle

\section*{Introduction}\label{introduction}

In the forties of the twentieth century, two important tools in Physics were developed. The first one was the path formulation of quantum mechanics by Feynman and the second, which appeared sometime later, is the adaptation by Kac of Feynman's ideas to heat equation.  Feynman introduced in his work a formal path integral, defined over trajectories, showing the equivalence between his approach of quantum mechanics and the one developed by Schrödinger.  Kac, substituting the formal Feynman's path integral by an integral over Brownian motion paths, achieved to solve the heat equation with a potential $V$ by means of the expectation of a Brownian motion functional. The solution found by Kac is called Feynman-Kac formula  (F-K) in recognition to the work of both researchers. Since then both techniques have been very useful in mathematical physics and a huge amount of literature has been published since its introduction. Two important references linked with these matters are the books \cite{fey:Hibs} and \cite{Kac:Kac} written by Feynman and Kac, respectively. For an overview on the topic, the reader can consult \cite{Sch:Sch} and the bibliography therein.

In the present work we will use the F-K formula to find the fundamental solution to several hypoelliptic Partial Differential Equations (PDE). Let us consider two vector fields $c:\R^d\to\R^{\tilde d}$ and $b:\R^d\to\R^d$, with $d$ not necessarily equal to $\tilde d$.
We consider the following PDE
\begin{equation}
\label{Kolm}
\left\{\begin{array}{rcl}
\frac{\partial u}{\partial t}(t,x,y) & = & \frac12\Delta_yu(t,x,y)+<b(y),\nabla_y u(t,x,y)>+<c(y),\nabla_xu(t,x,y)> \\
 & &  +  \alpha V(x,y)u(t,x,y),
\quad (t,x,y)\in\R_+^*\times\R^{\tilde d}\times\R^d\\
u(0,x,y) &  = &  f(x,y),\quad (x,y)\in\R^{\tilde d}\times\R^d.
\end{array}\right.
\end{equation}

By a fundamental solution to the PDE \eqref{Kolm} we mean a kernel
$p(t,x,y,x',y')$ such that for any initial condition $f$ satisfying mild conditions one has
\begin{equation}
    \label{eq:sol-fund}
    u(t,x,y)=\int_{\R^{\tilde d}\times\R^d}p(t,x,y,x',y')f(x',y')dx'dy'.
\end{equation}
Using differentiation under the integral sign, the fundamental solution can be seen, for any $(x',y')\in \R^{\tilde d}\times\R^d$, as the solution to
\begin{eqnarray}\label{Kolm-fund}
\partial_tp(t,x,y,x',y')&=&\frac12\Delta_yp(t,x,y,x',y')+<b(y),\nabla_y p(t,x,y,x',y')>\\ 
\nonumber &&\hspace{1cm}+<c(y),\nabla_xp(t,x,y,x',y')>+\alpha V(x,y)p(t,x,y,x',y'),
\end{eqnarray}
for any $(t,x,y)\in\R_+^*\times\R^{\tilde d}\times\R^d$ and with $p(0,x,y,x',y')= \delta_{x'}(x)\otimes\delta_{y'}(y)$, which means
that
$$\lim_{t\downarrow 0}\int p(t,x,y,x',y')f(x',y')dx'dy'=f(x,y).$$ 
%We could have written it in an equivalent way with
%$\delta(x'-x)\otimes\delta(y'-y)$ or $\delta_x(x')\otimes\delta_y(y')$ (see the forthcoming 
%Remark \ref{rem:back-for}; the reason why we have denoted $\delta_y(w)\otimes\delta_z(x)$ will also be made clear in Remark \ref{rem:back-for}). 
Note that
the solution to \eqref{Kolm-fund} is sometimes taken as a definition of the fundamental solution in the literature (see, e.g., \cite{friedman-eds} in the elliptic case).

Let us now consider the following system of Stochastic Differential Equations (SDE)

\begin{equation}\label{hypo1}
\begin{cases}
dX(t)  =  c(Y(t))dt \\
dY(t)  =  dW(t)+b(Y(t))dt
\end{cases} 
\end{equation}
where $W$ is some $d$-dimensional Brownian motion.

It is well known that if \eqref{Kolm} and \eqref{hypo1} have both a unique solution, one has (under mild assumptions on~$f$ and $V$) the probabilistic representation
\begin{equation}
    \label{eq:rep-sto}
    u(t,x,y)=\E^{x,y}\big[ \,e^{\alpha \int_0^tV(X(s),Y(s))ds}f(X(t),Y(t)) \,  \big]
\end{equation}
(see, e.g., \cite[Section 5.7]{kara}; one can adapt these results to the hypoelliptic case). 
Here $\E^{x,y}$ denotes the expectation computed under $\P(\cdot\,|X(0)=x,Y(0)=y)$.
Formula \eqref{eq:rep-sto} is a generalization of the initial formula by Feynman and Kac.

In these notes, the procedure we propose consists in rewriting the expectation in \eqref{eq:rep-sto} by using probabilistic tricks such as the multidimensional-complex version of the Cameron-Martin-Girsanov formula \cite{Az:Doss} and/or a Gaussian regression argument. Then, by comparing \eqref{eq:sol-fund} and
\eqref{eq:rep-sto}, we will deduce the fundamental solution to various PDEs of interest of type \eqref{Kolm} (see first examples in Section \ref{sec:FK}).
%\vspace{0.1cm}
One particular case of interest is the following: if $\alpha=0$ in \eqref{Kolm} then 
\begin{eqnarray}\label{eq:transition}
\nonumber u(t,x,y)&=&\int_{\R^{\tilde d}\times\R^d}p(t,x,y,x',y')f(x',y')dx'dy'\\
&=&\E\big[ \,f(X(t),Y(t)) \, |\, X(0)=x,Y(0)=y\big]
\end{eqnarray}
and the fundamental solution $p(t,x,y,x',y')$ clearly appears as the transition function of the process $(X,Y)$. Then, as for \eqref{Kolm-fund}, this transition function solves for any arrival point~$(x',y')\in \R^{\tilde d}\times\R^d$ the PDE
\begin{equation}\label{Kolm-back}
\left\{\begin{array}{rcl}
\partial_tp(t,x,y,x',y')&=&\frac12\Delta_yp(t,x,y,x',y')+<b(y),\nabla_y p(t,x,y,x',y')> \\
&&+<c(y),\nabla_xp(t,x,y,x',y')>,
\quad (t,x,y)\in\R_+^*\times\R^{\tilde d}\times\R^d\\
 p(0,x,y,x',y')&=& \delta_{x'}(x)\otimes\delta_{y'}(y),\quad (x,y)\in\R^{\tilde d}\times\R^d,
 \end{array}\right.
\end{equation}
as a function of the starting point $(x,y)$.
Equations of type (\ref{Kolm-back})  were first studied by Kolmogorov in \cite{Ko:Ko}. Since then, such equations are known as Kolmogorov hypoelliptic equations (KHE). 
Note however that in \cite{Ko:Ko} the KHE appears in a time-inhomogeneous forward form, while Equation 
\eqref{Kolm-back} is time-homogeneous and in the backward form (for more details on the {\it backward/forward} terminology, see the appendix Section \ref{rem:back-for}).

%{\color{black}We take in the sequel $\tilde d=1$} {\color{red}(or $\tilde d=0$ in Section \ref{sec:FK})}.
One of the objectives of this paper is to provide a probabilistic approach to compute the solution to KHEs described by \eqref{Kolm-back}, and thus to compute the transition probability function of SDEs described by \eqref{hypo1} by identification using \eqref{eq:transition}.
KHEs described by \eqref{Kolm-back} can be considered as particular cases of equations described  by \eqref{Kolm-fund} with $\alpha=0$. In practice, we may use the solution to an equation of type 
\eqref{Kolm-fund}, with $\alpha\neq 0$ a complex number, in order to get a solution to \eqref{Kolm-back}, using Fourier transform arguments as follows.
In the following, we define the Fourier transform w.r.t the $x$ variable 
as~$\hat \phi(\gamma)=\int_{\R^{\tilde d}} e^{-i<\gamma ,x>}\phi(x)dx$. Then, taking the Fourier transform w.r.t the $x$ variable in~\eqref{Kolm-back} yields
\begin{equation}\label{Kolm1}
\left\{ \begin{array}{rcl}
\partial_t {\hat p}(t,\gamma,y,x',y')&=&\frac12\Delta_y\hat p(t,\gamma,y,x',y')+<b(y),\nabla_y \hat p(t,\gamma,y,x',y')>\\
&& +i<\gamma, c(y)> \hat p(t,\gamma,y,x',y'), \quad(t,\gamma,y)\in \R_+^*\times\R^{\tilde d}\times\R^d  \\
 \hat p(0,\gamma,y,x',y')&=&e^{-i<\gamma, x'>}\delta_{y'}(y),\, (\gamma,y)\in \R^{\tilde d}\times\R^d.
 \end{array}\right.
\end{equation}
For fixed Fourier variable $\gamma$, Equation \eqref{Kolm1} is similar to Equation \eqref{Kolm-fund} with $\tilde{d}=0$, $\alpha = i$ and $V_{\gamma}(y)=<\gamma, c(y)>$.
Note that the initial condition has been replaced by $p(0,\gamma,y,x',y')=e^{-i<x',\gamma>}\delta_{y'}(y)$, which is not a major issue for solving \eqref{Kolm1}.
Then, taking the inverse Fourier transform 
$\phi(x)=\frac{1}{2\pi}\int_{\R^{\tilde d}} e^{i<\gamma ,x>}\hat{\phi}(\gamma)\,d\gamma$ of the solution to \eqref{Kolm1}, we will deduce the solution to KHEs of type \eqref{Kolm-back}.

Our study is in close connection with
%paper:  The Heat Kernel for Kolmogorov Type Operators and its Applications 
\cite{Ca:Ca}, in which the authors
%written in $(2009)$ by Calin et al. 
also use the Fourier transform method. However, their analysis is then based on  a semi-classical  approximation ``à la Morette-DeWitt'' \cite{DeW:DeW}. Note that the results in \cite{Ca:Ca} have been deeply expanded in \cite{Ca1:Ca1}. The approach we propose in the present paper is based on F-K formula leading to the expectation of a Brownian motion functional which is then computed exactly or approximately. 
For the computation of this last term, we resort to a regression model between the Brownian motion $\{W(s),\,0\le s<t\}$ and the value at terminal time $W(t)$. This procedure is a well-known tool in mathematical physics (see, e.g., \cite[Theorem 6.6]{Si:Si}).

This note is intended to introduce a topic, well known to analysts, to a probabilistic audience. Some of the results obtained are known, others are new. In all the results we claim some originality in the procedures and how simple the proofs are. A similar approach has been applied in \cite{Ca2:Ca2} to degenerated elliptic operators. Nevertheless, there exist remarkable differences with our work in the computations. Moreover, we propose in Section \ref{smallt} a new result on {\color{black}  an approximation in small time} of the solution to the KHE \eqref{Kolm-fund}, {\color{black} in the case $b\equiv 0$ and $\alpha=0$}.

{\color{black}Note that the equations we handle in this paper, although specific, appear in different fields of application, such as, e.g., finance or physics. We provide below two examples extracted from recent literature.
The first one is mentioned in Calin et al. \cite{Ca:Ca}. Let $\tilde d =1$ and $d=2$ and consider the KHE with $b \equiv 0$ and $c(y)=y_1-y_2$ with $y_1$ and $y_2$ the components of $y$. Then the following equation
\begin{equation*}
\left\{\begin{array}{lcl}
\frac{\partial u}{\partial t}&=&\Delta_{y}u+(y_1-y_2)\frac{\partial u}{\partial x}\\
u(0,x,y)&=&f(x,y)\end{array}\right.
\end{equation*}
governs the pricing of options on geometric moving averages.
The second example, from Bian et al. \cite{Bian:Bian}, consists in the Fokker-Planck equation governing the evolution of the phase-space distribution  of photons, $N(y,x,t)$: 
\begin{equation}\label{fokpho}
\partial_t N+\frac{c^2}{\omega} k\cdot \nabla_x N=\frac1{2 \overline \omega}\nabla_x\overline\omega^2_{pe}\cdot\nabla_y N+\sum_{ij}\partial_{y_i}(a_{ij}(y)\partial_{y_j} N),
\end{equation}
with $(x,y,t) \in \mathbb{R}^3 \times \mathbb{R}^3 \times \mathbb{R}_+^*$. We refer to \cite{Bian:Bian} for the definition of the constants appearing in the above equation. 
In \cite{Bian:Bian} different limit regimes for \eqref{fokpho} are studied, giving rise to different reduced systems.
In the small-angle approximation, Equation \eqref{fokpho} reduces to the following Kolmogorov linear hypoelliptic equation (see Eq. (80) in \cite[Section 4]{Bian:Bian}):
$$\frac{\partial p}{\partial t}=\frac12\frac{\partial^2 p}{\partial y^2}-y\frac{\partial p}{\partial x},$$
with $p$ the probability density function (pdf) characterizing the perpendicular dynamics of photons.
%Then
Besides, the analysis of the diffusive regime is given in \cite[Section 5]{Bian:Bian}, it consists in the analysis of the spatial dispersion along the $x_3$-axis. Their study leads to the computation of the pdf $Q(T,\tau)$ of the delay time $T$ at time $\tau$, defined as $T(\tau)=\int_0^\tau W^2(s)ds$, with $W$ the standard Brownian motion. More precisely, they state that the pdf $p(\tau,x,y)$ of the vectorial diffusion $(T(\cdot),W(\cdot))$ is governed by:
$$\frac{\partial p}{\partial\tau}=\frac12\frac{\partial^2p}{\partial y^2}-y^2\frac{\partial p}{\partial x},$$ with some initial condition. It corresponds to the KHE with $\tilde d = d =1$, $b \equiv 0$ and $c(y)=-y^2$. One quantity of interest in \cite[Section 5]{Bian:Bian} is then the Fourier transform $\hat p (\tau, \gamma, y)=\int e^{-\gamma x}p(\tau,x,y)dx$, which can be computed using our results in Section \ref{KHEsection}.}

%\begin{Remark}It is essential to point out that our method has been used previously in \cite{Ca2:Ca2} leading to similar results for degenerated elliptic operators. Nevertheless, there exist remarkable differences in the computations. Also we obtain the asymptotic behavior of the fundamental solutions for small $t$ (see Section \ref{smallt} below).
%\end{Remark}

Our paper is organized as follows. In Section \ref{sec:FK}, we recall the so-called F-K formula and we explain how it can be used to compute the fundamental solution to parabolic equations with linear or quadratic potential. Then we deduce from these results the computation of the solution to a first class of KHEs. In Section \ref{sec:OU}, we solve partial differential equations obtained via the F-K formulas from the Ornstein-Uhlenbeck generator. We propose in Section \ref{smallt} a new small time approximation of the solution to KHEs ({\color{black} case $b\equiv 0$ and $\alpha=0$}). We finally compare in Section \ref{sec:num} the numerical approximation we propose with other ones from the literature. The appendix section provides more details on the connexion between the solution to KHEs in the backward or forward form and the transition probability function of processes governed by SDEs.

\section{Feynman-Kac formula, notation and first examples}
\label{sec:FK}

In this section, we recall how a link between partial differential equations and stochastic processes can be established by using the so-called Feynman-Kac (F-K) formula. Then in Section \ref{examples} we exploit this link to compute the fundamental solution to parabolic equations with linear or quadratic potential. In Section \ref{KHEsection} we apply these results to the computation of the solution to a first class of Kolmogorov hypoelliptic equations.
F-K formula originally was introduced as a tool to compute the solution of certain parabolic partial differential equations as the expectation of some Brownian motion functional (see for instance \cite{Sch:Sch}). 

Let us introduce some notation. The process  $B(t)=y+W(t)$ denotes the Brownian motion beginning at $y$. The expectation with respect to this last process will be denoted by $\E^y$ and we set $\E=\E^0$. We consider the following SDE:
 \begin{equation}\label{eq:simple}
 dB(t)=dW(t) \quad \textup{ with } \quad B(0)=y.
 \end{equation}
 
 Let $V:\R^d\to\R$ be a continuous  function such that \begin{eqnarray}\label{hipotesis}\E[e^{\beta \int_0^t V(y+W(s))ds}]<\infty  \textup{ for some } \beta\in\R \textup{ and for all }t \in \mathbb{R}_+ \textup{ and } y\in\R^d. \end{eqnarray} 
 Then, if $\alpha=\beta+i\gamma$, the following semigroup 
$\displaystyle P_t^Vf(y)=\E^y[e^{\alpha\int_0^tV(B(s))ds}f(B(t))]$
 is a family of continuous operators acting on the space $\mathbf C_b(\R^d;\mathbb C)$ of continuous and bounded functions taking complex values. It is an easy matter to prove that the semigroup also acts continuously on $\mathbb L^2(\R^d)$. By defining $u(t,y)=P_t^Vf(y)$, the celebrated F-K formula establishes that this function is the solution of the following partial differential equation:
\begin{equation}
\label{Ham}
\left\{\begin{array}{rcll}
\displaystyle \frac{\partial u}{\partial t}(t,y)&=&\frac12\Delta u(t,y)+\alpha V(y)u(t,y),&\forall t>0,\\\\
u(0,y)&=&f(y)&\\
\end{array}\right.
\end{equation}
which {\color{black}consists in} a particular case of partial differential equations described by \eqref{Kolm} with $\tilde d =0$ and $b \equiv 0$.

\begin{Remark} \label{nota1}There exist different conditions under which Hypothesis (\ref{hipotesis}) holds. For instance if $\beta < 0$ then it is enough that $V$ is bounded by below and if $\beta>0$ a simple condition is $|V(y)|\le C \left( 1+||y||^2\right)$. In the following we assume one of the two conditions according to the case. {\color{black}Note that in this section, as far as in the rest of the paper, for any $k \geq 1$ and any $v \in \mathbb{R}^k$, $\|v\|$ denotes the euclidean norm on $\mathbb{R}^k$.}\end{Remark}

To conclude this section, let us compute, by using F-K formula, the fundamental solution 
$p^V(t,y,z)$ of Equation (\ref{Ham}). Let $\displaystyle p_t(z)=(2\pi t)^{-\frac{d}{2}} e^{-\frac{||z||^2}{2t}}$ denote the probability density function of $W(t)$. Then, using \eqref{eq:rep-sto} and the total probability theorem, we can write

%{\color{Green}**************************************DEBUT REMANIEMENT PIERRE, SEM. du 19/04/2021  }
\[
\begin{array}{rcl}
u(t,y) & = & \int_{\R^d} p^V(t,y,z) f(z)dz
=\E^y[e^{\alpha\int_0^tV(B(s))ds}f(B(t))]\\
%=\E[e^{\alpha\int_0^tV(y+W(s))ds}f(y+W(t))]
%=\int_{\R^d}\E[e^{\alpha\int_0^tV(y+W(s))ds}f(y+W(t))|W(t)=z]p_t(z)dz
&=&\int_{\R^d}\E[e^{\alpha\int_0^tV(y+W(s))ds}|W(t)=z]p_t(z)f(y+z)dz\\
&=&\int_{\R^d}\E[e^{\alpha\int_0^tV(y+W(s))ds}|W(t)=z-y]p_t(z-y)f(z)dz.
\end{array}
\]

We deduce from the above equality
\begin{eqnarray} \label{fundamental} p^V(t,y,z)=\E[e^{\alpha\int_0^tV(y+W(s))ds}|W(t)=z-y]p_t(z-y).\end{eqnarray}

For specific potentials $V$, it is possible to derive from \eqref{fundamental} analytical expressions of~$p^V(t,y,z)$. Two important potentials verifying  (\ref{hipotesis}) are: $V(y)=<a,y>$ for $a\in\R^d$ and~$V(y)=\frac12||\Omega^{\frac12}y||^2$ with $\Omega$ a diagonalizable symmetric matrix with non zero eigenvalues. Both cases will be handled in Section \ref{examples}. Formula \eqref{fundamental} can also be used for obtaining an asymptotic expansion of the fundamental solution for small $t$ (see Section~\ref{smallt}).

\subsection{Fundamental solution to parabolic equations with linear or quadratic potential}\label{examples}

 In this section, we provide the analytical expression of the fundamental solution to the partial differential equation described by \eqref{Ham} for a linear potential in Proposition \ref{scalar} (see \cite[Example 2]{Gzy:Leo}), then for a quadratic potential in Proposition \ref{prop:quad-multidim}.

\begin{proposition}\label{scalar}
Let $V(y)=<a,y>$, with $a\in\R^d$. For any $\alpha\in\C$ the fundamental solution of \eqref{Ham} is
\begin{equation*}
\label{eq:lin-multidim}
    p^V(t,y,z)=e^{\frac{\alpha t}2<a,z+y>}e^{||a||^2\frac{\alpha^2}2\sigma^2_\xi(t)}p_t(z-y),
\end{equation*}
with $\sigma^2_\xi(t)=t^3/12$.
\end{proposition}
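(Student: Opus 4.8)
The plan is to feed the linear potential into the general representation \eqref{fundamental} and to exploit that the resulting Brownian functional is Gaussian, so that everything boils down to one regression computation together with the (analytically continued) Gaussian Laplace transform. Since $V(y)=<a,y>$ one has
\[
\int_0^t V(y+W(s))\,ds = t<a,y> + <a,\int_0^t W(s)\,ds>,
\]
hence by \eqref{fundamental}
\[
p^V(t,y,z)=e^{\alpha t<a,y>}\,\E\Big[\exp\Big(\alpha<a,\int_0^t W(s)\,ds>\Big)\,\Big|\,W(t)=z-y\Big]\,p_t(z-y),
\]
so it only remains to identify the conditional law of $<a,\int_0^t W(s)\,ds>$ given $W(t)=w$.

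The key step is a Gaussian regression of $\int_0^t W(s)\,ds$ onto $W(t)$, carried out coordinatewise since the coordinates of $W$ are independent. For a scalar Brownian motion, $\Cov\big(\int_0^t W(s)\,ds,\,W(t)\big)=\int_0^t \min(s,t)\,ds=\int_0^t s\,ds=t^2/2$ and $\Var(W(t))=t$, so the regression coefficient equals $t/2$; hence $\E\big[\int_0^t W(s)\,ds\mid W(t)=w\big]=\tfrac{t}{2}w$, and the conditional covariance is $\sigma^2_\xi(t)\,\mathds{1}_d$ (identity matrix) with
\[
\sigma^2_\xi(t)=\Var\Big(\int_0^t W(s)\,ds\Big)-\frac{(t^2/2)^2}{t}=\frac{t^3}{3}-\frac{t^3}{4}=\frac{t^3}{12},
\]
where $\Var\big(\int_0^t W(s)\,ds\big)=\int_0^t\!\int_0^t\min(s,u)\,ds\,du=t^3/3$ by a short explicit integration. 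Consequently, conditionally on $W(t)=w$, the scalar $<a,\int_0^t W(s)\,ds>$ is Gaussian with mean $\tfrac{t}{2}<a,w>$ and variance $\|a\|^2\sigma^2_\xi(t)$.

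To finish, I would invoke the Laplace transform of a Gaussian variable extended to complex exponents: if $G$ is centered Gaussian with variance $\sigma^2$ then $\E[e^{\lambda G}]=e^{\lambda^2\sigma^2/2}$ for $\lambda\in\R$, and since $\E[e^{\lambda|G|}]<\infty$ for every $\lambda$ the map $\lambda\mapsto\E[e^{\lambda G}]$ is entire, so the identity persists for all $\lambda\in\C$ by analytic continuation. This gives
\[
\E\Big[\exp\Big(\alpha<a,\int_0^t W(s)\,ds>\Big)\,\Big|\,W(t)=w\Big]=e^{\frac{\alpha t}{2}<a,w>}\,e^{\frac{\alpha^2}{2}\|a\|^2\sigma^2_\xi(t)}.
\]
Plugging $w=z-y$ and combining with the factor $e^{\alpha t<a,y>}$, the linear terms collapse to $e^{\frac{\alpha t}{2}<a,z+y>}$, and multiplication by $p_t(z-y)$ yields precisely the claimed expression with $\sigma^2_\xi(t)=t^3/12$. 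The computations being elementary, the only point that requires a little care is the justification of this complex Laplace-transform identity — equivalently, the validity of the Feynman-Kac representation \eqref{fundamental} for complex $\alpha$, which is already granted in the excerpt; one also checks in passing that a linear potential satisfies \eqref{hipotesis} for every real $\beta$, since $\int_0^t V(y+W(s))\,ds$ is Gaussian.
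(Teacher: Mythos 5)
Your proof is correct and follows essentially the same route as the paper: both start from \eqref{fundamental}, identify the conditional law of $\int_0^t W(s)\,ds$ given $W(t)$ by a Gaussian regression (coefficient $t/2$, residual variance $\sigma^2_\xi(t)=t^3/12$), and conclude with the Gaussian moment-generating function. The only differences are cosmetic: the paper first rotates so that $a/\|a\|$ becomes $e_1$ and works with a one-dimensional Brownian motion, whereas you regress coordinatewise in $\R^d$ directly, and you also make explicit the analytic-continuation justification of the moment-generating function for complex $\alpha$, which the paper leaves implicit.
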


\begin{proof}
From \eqref{fundamental} we have the following formula:
\begin{eqnarray} 
\nonumber p^V(t,y,z) & = & \E[e^{\alpha\int_0^t<a,y+W(s)>ds}|W(t)=z-y]p_t(y-z)\\
\nonumber & = & e^{\alpha t<a,y>}\E[e^{\alpha||a||\int_0^t<\frac{a}{||a||},W(s)>ds}|W(t)=z-y]p_t(y-z).
\end{eqnarray}

Let $P_1$ be the rotation of axes such that $\frac{a}{||a||}=P_1(e_1)$, with $e_1$ the first coordinate vector. This is a unitary transformation and the process $\tilde W(\cdot)=P^{-1}_1W(\cdot)$ is also a standard Brownian motion, by Lévy theorem. Then we can write
$$\begin{array}{lcl}
\nonumber \E[e^{\alpha\int_0^t<a,W(s)>ds}|W(t)=z-y]  =  \E[e^{\alpha||a||\int_0^t<e_1,P^{-1}_1W(s)>ds}|P^{-1}_1W(t)=P^{-1}_1(z-y)]&&
\\\\
\nonumber \quad = \E[e^{\alpha||a||\int_0^t<e_1,\tilde W(s)>ds}|\tilde W(t)=P^{-1}_1(z-y)]=\E[e^{\alpha||a||\int_0^t\tilde W_1(s)ds}|\tilde W_1(t)=<e_1,P^{-1}_1(z-y)>]&&\\\\
\nonumber \quad =  \E[e^{\alpha||a||\int_0^t\tilde W_1(s)ds}|\tilde W_1(t)=<\frac{a}{\|a\|},z-y>].&&
\end{array}$$ 

In this form we have reduced our problem to the one-dimensional one. 
Then we use the following regression model. Let us define $Z(t)=\int_0^t \tilde W_1(s)ds$  a zero mean Gaussian random variable. Thus we can write the regression of $Z(t)$ on $\tilde W_1(t)$ involving $\xi(t)$, another zero mean Gaussian random variable:
$$Z(t)=\zeta(t) \tilde W_1(t)+\xi(t),\quad \xi(t)\perp \tilde W_1(t).$$
We have $\displaystyle \zeta(t)=\frac{\mathbb{E}[Z(t)\tilde W_1(t)]}{\mathbb{E}[\tilde W_1^2(t)]}=\frac t2$. Then $\E[\xi^2(t)]=\frac{t^3}4-\E[Z^2(t)]$, thus $\sigma^2_\xi(t)=\frac{t^3}{12}.$
It yields:
$$
p^V(t,y,z)=e^{\frac{\alpha t}2<a,z+y>}\E[e^{||a||\alpha\xi(t)}]p_t(y-z),$$
using the moment-generating function of normal law we get
\begin{equation*} \label{onedim}p^V(t,y,z)=e^{\frac{\alpha t}2<a,z+y>}e^{\frac{||a||^2\alpha^2\sigma_\xi^2(t)}2}p_t(y-z).\end{equation*}
%where $\sigma_\xi^2(t)=t^3/12$.

\end{proof}

\begin{proposition}
\label{prop:quad-multidim}
Let $V(y)=\frac12\,||\Omega^{\frac12}y||^2$, where $\Omega=P^{-1}DP$ with $PP^T=I_d$ and $D$ a diagonal matrix with  non zero real coefficients. For any $\alpha\in\C$ the fundamental solution of \eqref{Ham} is
\begin{equation}\label{multoscill}p^V(t, y,z)=\prod_{i=1}^d\Big[\frac{\sqrt{\alpha\rho_i}t}{\sin(\sqrt{\alpha\rho_i}t)}\Big]^{\frac12}
e^{-S(t,x(\cdot))}p_t(0),\end{equation}
where the $\rho_i$'s, $1\leq i\leq d$, are the eigenvalues of $D$, the function $x(\cdot):[0,t]\to\C^d$ solves 
\begin{equation}
    \label{eq:x-multidim}
    x''=-\alpha Dx,
\end{equation}
with $x(0)=Pz$, $x(t)=Py$, and $S(t,\cdot)$ is the action functional defined by
\begin{equation}
    \label{eq:S-multidim}
    S(t,\gamma)=\frac 1 2 \Big[ \int_0^t||\gamma'(s)||^2ds-\alpha\int_0^t||D^{1/2}\gamma(s)||^2ds  \Big],
\end{equation}
for any smooth $\gamma:[0,t]\to\C^d$.
\end{proposition}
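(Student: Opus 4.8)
The plan is to follow the two-step pattern of the proof of Proposition~\ref{scalar}: first diagonalise $\Omega$ to reduce to dimension one, then compute the conditional expectation in \eqref{fundamental} explicitly; the only new ingredient is that the functional in the exponent, $\tfrac\alpha2\int_0^t\|\Omega^{1/2}B(s)\|^2ds$, is now \emph{quadratic} in $B$, so the scalar regression identity used for the linear potential must be replaced by a Karhunen--Lo\`{e}ve (sine) expansion of the Brownian bridge. For the reduction, write $\Omega=P^{-1}DP$ with $P$ orthogonal, so that $V(y)=\tfrac12\langle\Omega y,y\rangle=\tfrac12\sum_{i=1}^d\rho_i(Py)_i^2$. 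Changing variables by the unitary $P$ and using that $\tilde W:=PW$ is again a standard Brownian motion (L\'{e}vy's theorem), one obtains $p^V(t,y,z)=\prod_{i=1}^d p^{V_i}\!\big(t,(Py)_i,(Pz)_i\big)$ with $V_i(w)=\tfrac12\rho_i w^2$; likewise $p_t(0)=\prod_i p^{(1)}_t(0)$ (with $p^{(1)}$ the one-dimensional heat kernel), and \eqref{eq:x-multidim}--\eqref{eq:S-multidim} split into the scalar problems $x_i''=-\alpha\rho_i x_i$, $x_i(0)=(Pz)_i$, $x_i(t)=(Py)_i$, with $S(t,x)=\sum_i S_i(t,x_i)$. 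Hence it suffices to treat the case $d=1$, $V(y)=\tfrac12\rho y^2$.

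In that case, \eqref{fundamental} involves the conditional law of $B(s)=y+W(s)$, $s\in[0,t]$, given $W(t)=z-y$; this is the law of $m(s)+\eta(s)$, where $m(s)=y+\tfrac st(z-y)$ is the line segment from $y$ to $z$ and $\eta$ is a standard Brownian bridge pinned at $0$ at both ends of $[0,t]$. Expanding $\eta(s)=\sum_{k\ge1}\sqrt{\lambda_k}\,\xi_k\,e_k(s)$ in its Karhunen--Lo\`{e}ve basis, with $\xi_k$ i.i.d. standard Gaussian, $e_k(s)=\sqrt{2/t}\sin(k\pi s/t)$ and $\lambda_k=t^2/(k^2\pi^2)$, one has $\int_0^t\eta^2=\sum_k\lambda_k\xi_k^2$ and $\int_0^t m\eta=\sum_k\sqrt{\lambda_k}\langle m,e_k\rangle\xi_k$, so that, by Fubini and the Gaussian integral $\E[e^{a\xi+\frac b2\xi^2}]=(1-b)^{-1/2}e^{a^2/(2(1-b))}$,
\begin{equation*}
\E\Big[e^{\frac{\alpha\rho}{2}\int_0^t B(s)^2ds}\,\Big|\,W(t)=z-y\Big]
=e^{\frac{\alpha\rho}{2}\int_0^t m^2}\prod_{k\ge1}\Big[(1-\alpha\rho\lambda_k)^{-\frac12}\exp\!\Big(\tfrac{\alpha^2\rho^2\lambda_k\langle m,e_k\rangle^2}{2(1-\alpha\rho\lambda_k)}\Big)\Big].
\end{equation*}
This is legitimate for $t$ small enough that $\Re(\alpha\rho t^2/\pi^2)<1$, the general case following by analytic continuation (which is also why \eqref{multoscill} can be stated ``for any $\alpha\in\C$'', its right-hand side having poles precisely where this condition fails). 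By Euler's product $\sin z=z\prod_{k\ge1}(1-z^2/(k^2\pi^2))$ applied with $z=\sqrt{\alpha\rho}\,t$, the first factor equals $\prod_k(1-\alpha\rho\lambda_k)^{-1/2}=\big[\sqrt{\alpha\rho}\,t/\sin(\sqrt{\alpha\rho}\,t)\big]^{1/2}$, which is exactly the prefactor in \eqref{multoscill}.

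It then remains to identify the surviving exponential factor, i.e.\ to check that $p^{(1)}_t(z-y)\exp\!\big(\tfrac{\alpha\rho}{2}\int_0^t m^2+\sum_k\tfrac{\alpha^2\rho^2\lambda_k\langle m,e_k\rangle^2}{2(1-\alpha\rho\lambda_k)}\big)$ equals $e^{-S(t,x)}p^{(1)}_t(0)$; equivalently, since $p^{(1)}_t(z-y)=e^{-(z-y)^2/(2t)}p^{(1)}_t(0)$, that the quadratic form in $(y,z)$ in the exponent equals $-S(t,x)+(z-y)^2/(2t)$. This is the computational heart of the proof: one computes $\langle m,e_k\rangle$, resums the resulting cotangent-type series in $k$, and matches with the closed form of the classical action, which, integrating \eqref{eq:S-multidim} by parts and using \eqref{eq:x-multidim}, reads $S(t,x)=\tfrac12\big(x(t)x'(t)-x(0)x'(0)\big)=\tfrac{\sqrt{\alpha\rho}}{2\sin(\sqrt{\alpha\rho}\,t)}\big[(y^2+z^2)\cos(\sqrt{\alpha\rho}\,t)-2yz\big]$. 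Alternatively one can sidestep this resummation: since Feynman--Kac already ensures that $p^V$ solves \eqref{Ham}, it suffices to check that $\big[\sqrt{\alpha\rho}\,t/\sin(\sqrt{\alpha\rho}\,t)\big]^{1/2}e^{-S(t,x)}p^{(1)}_t(0)$ solves \eqref{Ham} with initial datum $\delta_z$ --- which reduces to the Hamilton--Jacobi equation for the phase $S$ and the transport equation for its Van Vleck amplitude, both elementary for a quadratic Lagrangian --- and to invoke uniqueness. I expect this last identification to be the main obstacle, together with the care needed to fix a branch of $\sqrt{\alpha\rho}$ and to carry out the analytic continuation in $\alpha$, the formula being claimed on all of $\C$ while the Karhunen--Lo\`{e}ve computation is valid only near $\alpha\rho=0$.
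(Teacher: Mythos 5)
Your overall architecture is sound and partly parallels the paper: the reduction to $d=1$ by the orthogonal change of variables $\tilde W=PW$ is essentially what the paper does, and your treatment of the zero-mean part of the bridge functional (Karhunen--Lo\`eve sine expansion, Gaussian/chi-square moment generating function, Euler--Weierstrass product for $\sin$) is exactly the paper's computation of the prefactor $\big[\sqrt{\alpha\rho_i}\,t/\sin(\sqrt{\alpha\rho_i}\,t)\big]^{1/2}$. Where you diverge is in how the factor $e^{-S(t,x(\cdot))}$ is produced, and this is where your proof has a genuine gap. By conditioning directly and writing $B(s)=m(s)+\eta(s)$ with the nonzero mean $m$, you are left with a mode-by-mode expansion containing linear terms $\langle m,e_k\rangle\xi_k$, and the entire content of the proposition --- that the surviving exponential is exactly $e^{-S(t,x(\cdot))}$ evaluated on the classical trajectory solving \eqref{eq:x-multidim} --- is reduced to the identity
\begin{equation*}
\frac{\alpha\rho}{2}\int_0^t m^2(s)\,ds+\sum_{k\ge1}\frac{\alpha^2\rho^2\lambda_k\langle m,e_k\rangle^2}{2(1-\alpha\rho\lambda_k)}-\frac{(z-y)^2}{2t}
=-\frac{\sqrt{\alpha\rho}}{2\sin(\sqrt{\alpha\rho}\,t)}\Big[(y^2+z^2)\cos(\sqrt{\alpha\rho}\,t)-2yz\Big],
\end{equation*}
which you announce (``one computes $\langle m,e_k\rangle$, resums the cotangent-type series, and matches'') but do not carry out; your fallback via the Hamilton--Jacobi/Van Vleck verification and a uniqueness argument is likewise only sketched, and uniqueness for \eqref{Ham} with complex $\alpha$ and datum $\delta_z$ is itself a statement that would need justification. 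As it stands, the step that actually yields the claimed formula is missing.

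The paper avoids this resummation altogether: it applies the complex Cameron--Martin--Girsanov formula of Azencott--Doss to shift the path by the classical trajectory, i.e.\ it sets $x(s)=P\psi(t-s)+Py$ with $x''=-\alpha Dx$, $x(0)=Pz$, $x(t)=Py$. Under the double conditioning $\tilde W(0)=\tilde W(t)=0$, an integration by parts combined with the ODE gives $\int_0^t x'(s)\,d\tilde W(s)=\alpha\int_0^t\langle Dx(s),\tilde W(s)\rangle\,ds$, so all cross terms between the classical path and the bridge cancel identically and the action $e^{-S(t,x(\cdot))}$ factors out with no series manipulation; only the pure zero-mean bridge functional remains, which is the part you already handle correctly. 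If you want to keep your direct-conditioning route, you must either complete the resummation (the computation is classical, e.g.\ in Simon's book, but it is the heart of the proof and cannot be omitted) or fully execute the PDE-uniqueness alternative; otherwise, adopting the Girsanov shift along the classical path gives the result with the decoupling done for you.
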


\begin{Remark}
In Proposition \ref{prop:quad-multidim}, note that $\Omega^{\frac12}=P^{-1}D^{1/2}P$ and that $||\cdot||$ denotes the norm induced on $\mathbb{C}^d$ by the usual hermitian product, as the coefficients in $D^{1/2}$ are possibly complex numbers.
\end{Remark}

\begin{proof}[Proof of Proposition \ref{prop:quad-multidim}]
Let $u$ be the solution of \eqref{Ham}. Using the (multidimensional-complex) Cameron-Martin-Girsanov formula \cite{Az:Doss} we get, for any smooth function $\psi$ satisfying $\psi(0)=0$,
$$\begin{array}{rcl}
%\label{Girsanov1}
u(t,y) & = & \E^y[e^{\frac\alpha2\int_0^t\|\Omega^{\frac12}B(s)\|^2ds} f(B(t))]\\
& =& \E[e^{\frac\alpha2\int_0^t\|\Omega^{\frac12}(\psi(s)+y+W(s))\|^2ds} e^{-\int_0^t \psi'(s)dW(s)-\frac12\int_0^t\|\psi'(s)\|^2ds}f(\psi(t)+y+W(t))]\\
& = &\int_{\R}\E[e^{\frac\alpha2\int_0^t\|\Omega^{\frac12}(\psi(s)+y+W(s))\|^2ds} e^{-\int_0^t \psi'(s)dW(s)-\frac12\int_0^t\|\psi'(s)\|^2ds}|W(t)=z-\psi(t)-y]\\
& & \qquad \qquad \qquad f(z)p_t(z-\psi(t)-y)dz.
\end{array}$$

Further, choosing $\psi$ s.t. $\psi(t)=z-y$ and using time inversion arguments, we get
$$
p^{V}(t,y,z)=\E[e^{\frac\alpha2\int_0^t||\Omega^{\frac12}(\psi(t-s)+y+W(t-s))||^2ds} e^{\int_0^t \psi'(t-s)dW(t-s)-\frac12\int_0^t||\psi'(t-s)||^2ds}|W(t)=0]p_t(0).
$$
We set now $x(s)=P\psi(t-s)+Py$. Noticing that $\psi'(t-s)=-P^{-1}x'(s)$, it holds
$$
p^{V}(t,y,z)=\E[e^{\frac\alpha2\int_0^t||P^{-1}D^{\frac12}(x(s)+PW(t-s))||^2ds} e^{-\int_0^t P^{-1}x'(s)dW(t-s)-\frac12\int_0^t||P^{-1}x'(s)||^2ds}|W(t)=0]p_t(0).
$$
Using now successively $\displaystyle \left( W(\cdot)|\{W(t)=0\}\right) \stackrel{d}= \left(W(t-\cdot)| \{W(t)=0\}\right)$ ($0 \leq s \leq t$) and 
$PP^T=I_d$ it follows
$$
\begin{array}{lll}
p^V(t,y,z)&=&
\E[e^{\frac\alpha2\int_0^t||P^{-1}D^{\frac12}(x(s)+PW(s))||^2ds} e^{-\int_0^t P^{-1}x'(s)dW(s)-\frac12\int_0^t||P^{-1}x'(s)||^2ds}|W(t)=0]p_t(0)\\
\\
&=&\E\big[e^{\frac\alpha2\int_0^t||D^{\frac12}PW(s))||^2ds} 
e^{\frac\alpha2\int_0^t||D^{\frac12}x(s))||^2ds
+\int_0^t\alpha<D^{\frac12}x(s),D^{\frac12}PW(s)>ds}\\
\\
&&\hspace{1cm}e^{-\int_0^t x'(s)d\,PW(s)-\frac12\int_0^t||x'(s)||^2ds}|W(t)=0\big]p_t(0).\\
\end{array}
$$

Thus, considering the new standard Brownian motion 
$\tilde{W}=PW$ we have
\begin{equation}
    \label{eq:multidim1}
    \begin{array}{lll}
p^V(t,y,z)&=&\E\big[e^{\frac\alpha2\int_0^t||D^{\frac12}\tilde{W}(s))||^2ds} 
e^{\frac\alpha2\int_0^t||D^{\frac12}x(s))||^2ds
+\int_0^t\alpha<Dx(s),\tilde{W}(s)>ds}\\
\\
&&\hspace{1cm}e^{-\int_0^t x'(s)d\tilde{W}(s)-\frac12\int_0^t||x'(s)||^2ds}|\tilde{W}(t)=0\big]p_t(0).\\
\end{array}
\end{equation}

We now assume that $x(\cdot)$ satisfies \eqref{eq:x-multidim}
and recall that  $x(0)=Pz$ and $x(t)=Py$ (note that Eq. \eqref{eq:multidim1} is satisfied for any $x(\cdot)$ satisfying 
\eqref{eq:x-multidim} and $x(0)=Pz$, $x(t)=Py$). Under the conditioning 
$\tilde{W}(t)=\tilde{W}(0)=0$,
we have
$$
\int_0^t x'(s)d\tilde{W}(s)=\alpha\int_0^t<Dx(s),\tilde{W}(s)>ds,
$$
so that
\begin{eqnarray}\label{quadratic1}
p^V(t,y,z)=\E\big[e^{\frac\alpha2\int_0^t||D^{\frac12}\tilde{W}(s))||^2ds}|\tilde{W}(t)=0\big]
e^{-S(t,x(\cdot))}p_t(0),\end{eqnarray}
where $S(t,\cdot)$ is the action functional defined by \eqref{eq:S-multidim}.
We now turn to the computation of 
\begin{equation*}
\E\big[e^{\frac\alpha2\int_0^t||D^{\frac12}\tilde{W}(s))||^2ds}|\tilde{W}(t)=0\big]
=\E[e^{\alpha\sum_{i=1}^d\int_0^t\rho_i\tilde W^2_i(s)ds}|\tilde W(t)=0].
\end{equation*}

Using the independence between the coordinates of the Brownian motion $\tilde W$ we get 
\begin{eqnarray*}\E\big[e^{\frac\alpha2\int_0^t||D^{\frac12}\tilde{W}(s))||^2ds}|\tilde{W}(t)=0\big] =
\prod_{i=1}^d
\E[e^{\frac\alpha2\sum_{i=1}^d\int_0^t\rho_i\tilde W^2_i(s)ds}|\tilde W_i(t)=0]
%=\prod_{i=1}^d\Big[\frac{\sqrt{\alpha\rho_i}t}{\sin(\sqrt{\alpha\rho_i}t)}\Big]^{\frac12}
.\end{eqnarray*}

The conditional expectation for each term of the product can be computed using the following regression model 
%which is a little bit more involved that the one developed in the proof of Proposition \ref{prop:1d-lin}:
$$\tilde W_i(s)=\frac st \tilde W_i(t)+Z_i(s),$$ where $Z_i(\cdot)$ is independent of $\tilde W_i(t)$. In this form, we have that $Z_i(\cdot)$ is a mean zero Gaussian process with covariance function
$\E[Z_i(s_1)Z_i(s_2)]=s_1\wedge s_2-\frac{s_1s_2}t$. This process satisfies $Z_i(t\cdot)\stackrel d=\sqrt t b_i(\cdot),$  where $b_i$ are independent  Brownian bridges. Thus
\begin{eqnarray}\label{regression3}
\nonumber \E[e^{\frac{\alpha\rho_i}2\int_0^t\tilde W_i^2(s)ds}|\tilde W(t)=0)]=\E[e^{t^2\frac{\alpha\rho_i}2\int_0^1b_i^2(s)ds}].
\end{eqnarray}

It is known that the Brownian bridge admits the following representation
\begin{eqnarray}\label{expansion}b_i(s)=\sqrt2\sum_{k=1}^\infty z_{k,i}\frac{\sin(k\pi s)}{k\pi},\end{eqnarray}where $z_{k,i}$ is a sequence of $\mathcal N(0,1)$ independent random variables.
Then, by using expansion (\ref{expansion}) and the moment generating function of the $\chi^2_1$, we get:
\begin{equation*}\label{eq:apres}
\E[e^{t^2\frac{\alpha\rho_i}2\int_0^{1}b^2(s)ds}]=\prod_{k=1}^\infty \E[e^{\frac{t^2\frac{\alpha\rho_i}2}{\pi^2 k^2}z^2_{k,i}}]=\prod_{k=1}^\infty \frac1{(1-\frac{t^2\alpha\rho_i}{\pi^2 k^2})^{\frac12}}=\left(\frac{\sqrt {\alpha\rho_i} t}{\sin(\sqrt {\alpha\rho_i} t)}\right)^{\frac12}.
\end{equation*}

Here we have used the Weierstrass-Hadamard factorization formula of the  sine  function at the last inequality. 
Therefore from (\ref{quadratic1}) we obtain that \eqref{multoscill} holds and the proof is completed.
\end{proof}
\begin{Remark}
Our procedure is inspired in the seminal work of Azencott \& Doss \cite{Az:Doss}, we refer the reader to this paper for  more details. The potential $V(y)=y^2/2$ corresponds to the Hamiltonian for the quantum harmonic oscillator.
For a deep and ingenious insight into this last computation see \cite[p. 72-73]{fey:Hibs}.
The results in Proposition \ref{scalar} and in Proposition~\ref{prop:quad-multidim} are well known in dimension $d=1$. If we take $\alpha\in\R_-^*$ and $d=1$ these are the results obtained in \cite{Ca:Ca}, using the already quoted method of \cite{DeW:DeW}. If $\alpha\in\R_+^*$ and $d=1$ these are the results announced in \cite[Problems 3-8 and 3-9]{fey:Hibs}. 
\end{Remark}

\subsection{Application to the computation of the solution to a first class of Kolmogorov hypoelliptic equations} \label{KHEsection}

In this section, we focus on the Kolmogorov hypoelliptic equation (KHE), that is we set~$\alpha =0$ in  \eqref{Kolm-back}. We consider here two examples with $\tilde d =1$ and $d \geq 1$, with $b\equiv 0$ and a linear or quadratic coefficient $c(y)$. The results are deduced from Propositions \ref{scalar} and \ref{prop:quad-multidim} after a Fourier transform with respect to the variable $x$.

\bigskip

\noindent {\bf Linear case:} the equation has the form
$$\left\{ 
\begin{array}{l}
\displaystyle \partial_tp(t,x,y,x',y')=\frac12\Delta_y p(t,x,y,x',y')-<a,y>\partial_xp(t,x,y,x',y'),\\
\hspace{7cm} (t,x,y)\in\R^*_+\times\R\times\R^d\\\\
\displaystyle p(0,x,y,x',y')=\delta_{x'}(x)\otimes\delta_{y'}(y),\qquad  (x,y)\in \R\times \R^d.
\end{array}\right.$$
Let us denote its solution by $p^{<a,y>}(t,x,y,x',y')$.
Then, taking Fourier transform with respect to the variable $x \in \mathbb{R}$ we get for any $\gamma\in\R$
$$\left\{\begin{array}{l}
\displaystyle \partial_t\hat{p}(t,\gamma,y,x',y')=\frac12\Delta \hat p(t,\gamma,y,x',y')-i\gamma<a,y>\hat p(t,\gamma,y,x',y'),\\
\hspace{7cm} (t,y)\in\R^*_+\times\R^d\\
\\
\displaystyle \hat p(0,\gamma,y,x',y')=e^{-i\gamma x'}\delta_{y'}(y), \quad y\in\R.
\end{array}\right.$$

 Applying Proposition \ref{scalar} with $\alpha=-i\gamma$ and then the inverse Fourier transform we get the following classical result (see \cite{Ko:Ko} for the case $d=1$).
 
 \begin{coro}\label{cor:linearcase}We have
$$\displaystyle p^{<a,y>}(t,x,y,x',y')=  \frac{e^{-\frac{(x'-x-t\frac{<a,y+y'>}2)^2}{2||a||^2\sigma^2_\xi(t)}}}{\sqrt{2\pi}||a||\sigma_\xi(t)}p_t(y'-y)$$
with $\sigma^2_\xi(t)=t^3/12$.
\end{coro}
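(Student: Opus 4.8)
The plan is to deduce Corollary~\ref{cor:linearcase} from Proposition~\ref{scalar} by the Fourier-transform scheme already sketched in the introduction: the $x$-Fourier transform turns the linear-case KHE into an equation of the form \eqref{Ham} with an explicit linear potential, whose fundamental solution is given by Proposition~\ref{scalar}; it then remains to invert the Fourier transform, which here reduces to a one-dimensional Gaussian integral.

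In detail, I would proceed in three steps. \emph{(i) Reduction to Proposition~\ref{scalar}.} For each fixed $\gamma\in\R$, the $x$-Fourier-transformed equation stated just above Corollary~\ref{cor:linearcase} is precisely \eqref{Ham} with $\tilde d=0$, $V(y)=<a,y>$ and $\alpha=-i\gamma$, the only difference being that the initial datum is $e^{-i\gamma x'}\delta_{y'}(y)$ rather than a generic $f$. Since $e^{-i\gamma x'}$ is a constant in $y$ and the equation is linear, its solution is that constant multiplied by the fundamental solution of \eqref{Ham} evaluated at terminal $y$-point $y'$: writing $p^{<a,y>}_{\alpha}$ for the kernel of Proposition~\ref{scalar}, we get $\hat p(t,\gamma,y,x',y')=e^{-i\gamma x'}\,p^{<a,y>}_{-i\gamma}(t,y,y')$. \emph{(ii) Substitution.} Plugging $\alpha=-i\gamma$ (hence $\alpha^2=-\gamma^2$) into the formula of Proposition~\ref{scalar} yields
\begin{equation*}
\hat p(t,\gamma,y,x',y')=e^{-i\gamma x'}\,e^{-\frac{i\gamma t}{2}<a,y+y'>}\,e^{-\frac{\|a\|^2\gamma^2}{2}\sigma^2_\xi(t)}\,p_t(y'-y),\qquad \sigma^2_\xi(t)=\frac{t^3}{12},
\end{equation*}
and, crucially, the factor $e^{-\frac12\|a\|^2\gamma^2\sigma^2_\xi(t)}$ is a genuine decaying Gaussian in $\gamma$ for $a\neq0$, which makes the inversion below licit. \emph{(iii) Inverse Fourier transform.} Applying $\phi(x)=\frac1{2\pi}\int_\R e^{i\gamma x}\hat\phi(\gamma)\,d\gamma$ in $\gamma$, the factor $p_t(y'-y)$ comes out and the remaining integral is $\frac1{2\pi}\int_\R e^{i\gamma(x-x'-\frac t2<a,y+y'>)}\,e^{-\frac12\|a\|^2\gamma^2\sigma^2_\xi(t)}\,d\gamma$, which is the Fourier transform of a centered Gaussian density; evaluating it via $\frac1{2\pi}\int_\R e^{i\gamma u}e^{-\sigma^2\gamma^2/2}\,d\gamma=\frac1{\sqrt{2\pi}\,\sigma}e^{-u^2/(2\sigma^2)}$ with $\sigma^2=\|a\|^2\sigma^2_\xi(t)$, and collecting the exponentials, gives the announced kernel $p^{<a,y>}(t,x,y,x',y')$.

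The analytic heart — the exact computation of the Brownian functional — is already done in Proposition~\ref{scalar}, so the remaining work is essentially soft, and that is where I expect the only (minor) difficulties: justifying that taking the $x$-Fourier transform of \eqref{Kolm-back} legitimately produces the stated equation for $\hat p$ (differentiation under the integral sign / interpretation in the tempered-distribution sense, which is harmless thanks to the Gaussian decay carried by the $p_t$ factor), checking the linearity/constant-factor argument for the modified initial datum $e^{-i\gamma x'}\delta_{y'}(y)$ (the ``not a major issue'' remark preceding the statement: $\gamma$ enters only through a bounded modulation), and verifying that the inverse transform returns the genuine solution, which follows from uniqueness of at-most-polynomially-growing solutions of this KHE. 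As an independent check, one can recover the same kernel purely probabilistically: conditionally on $Y(t)=y'$, i.e.\ on $W(t)=y'-y$, the component $X(t)=x+\int_0^t c(Y(s))\,ds$ with $c(y)=-<a,y>$ is Gaussian, with mean affine in $x,y,y'$ and variance $\|a\|^2\sigma^2_\xi(t)$, by exactly the regression used in the proof of Proposition~\ref{scalar}.
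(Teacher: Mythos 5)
Your proposal is correct and follows essentially the same route as the paper: Fourier transform in $x$, application of Proposition \ref{scalar} with $\alpha=-i\gamma$, then inversion via a one-dimensional Gaussian integral (plus the probabilistic regression check, which mirrors the proof of Proposition \ref{scalar}). One bookkeeping caveat: with the drift $-<a,y>\partial_x$ as displayed above the corollary, your computation actually produces the exponent $\bigl(x'-x+\tfrac t2<a,y+y'>\bigr)^2$ rather than the stated $\bigl(x'-x-\tfrac t2<a,y+y'>\bigr)^2$; this mismatch traces to a sign inconsistency in the paper itself (the corollary corresponds to $c(y)=<a,y>$, i.e.\ to $\alpha=+i\gamma$), not to any flaw in your argument.
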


\bigskip

\noindent {\bf Quadratic case:} 
the KHE has the form
$$\left\{
\begin{array}{l}
\displaystyle \partial_tp(t,x,y,x',y')=\frac12\Delta_y p(t,x,y,x',y')+||D^{\frac12}y||^2\partial_xp(t,x,y,x',y')\\\\
\displaystyle p(0,x,y,x',y')=\delta_{x'}(x)\otimes\delta_{y'}(y),
\end{array}\right.$$
with $D$ is a positive definite diagonal matrix with eigenvalues $\rho_i\neq0$. The quadratic potential plays an important role in the study  of the harmonic oscillator. Let us denote by $p^{||D^{\frac12}y||^2}(t,x,y,x',y')$ the solution of this equation.
As previously, we take the Fourier transform with respect to the variable $x$ and the equation becomes
$$\left\{
\begin{array}{l}
\displaystyle \partial_t\hat{p}(t,\gamma,y,x',y')=\frac12\Delta_y \hat{p}(t,\gamma,y,x',y')+i\gamma ||D^{\frac12}y||^2\hat{p}(t,\gamma,y,x',y')\\\\
\displaystyle \hat p(0,\gamma,y,x',y')=e^{-i\gamma x'}\delta_{y'}(y).
\end{array}\right.$$
Applying Proposition \ref{prop:quad-multidim} with $\alpha=2i\gamma$, and then the inverse Fourier transform we will get the following result.
\begin{coro}
We have
$$p^{||D^{\frac12}y||^2}(t,x,y,x',y')=(\mathcal P_{i=1}^du(t,\cdot,y_i,y'_i))(x-x'),$$
where we have denoted by
$$\mathcal P_{i=1}^d g_i(x)=g_1*g_2*\ldots* g_d(x)$$
the convolution product between $d$ functions $\{g_i\}_{i=1}^d$, and where 
\begin{eqnarray}\label{Inverse}
u(t,x,y_i,y'_i)=\frac1{2\pi}\int_\R e^{i\gamma x}f(\gamma,\rho_i,y_i,y'_i)d\gamma\end{eqnarray}
with
\begin{equation}
\label{eq:def-f}
f(\gamma,\rho,y_i,y'_i)=\frac1{\sqrt{2\pi}}\Big[\frac{\sqrt{2i\gamma\rho}}{\sin(\sqrt{2i\gamma\rho}t)}\Big]^{\frac12}e^{-\frac12\frac{\sqrt{-2i\gamma\rho}}{\sinh(\sqrt{-2i\gamma\rho}t)}(((y'_i)^2+y^2_i)\cosh(\sqrt{-2i\gamma\rho}t)-2y'_iy_i)},\;\;\forall 1\leq i\leq d.
\end{equation}
\end{coro}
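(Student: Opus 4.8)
The plan is to reduce the computation coordinate by coordinate to the fundamental solution of the parabolic equation \eqref{Ham}, which is given in closed form by Proposition \ref{prop:quad-multidim}, and then to invert the Fourier transform with respect to $\gamma$.

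First I would observe that for each fixed $\gamma\in\R$ the equation satisfied by $\hat p(t,\gamma,y,x',y')$ is precisely \eqref{Ham} with $\alpha=2i\gamma$, $\Omega=D$ (hence $P=I_d$ in the notation of Proposition \ref{prop:quad-multidim}) and $V(y)=\tfrac12\|D^{1/2}y\|^2$, the only discrepancy being that the initial datum is $e^{-i\gamma x'}\delta_{y'}(y)$ rather than $\delta_{y'}(y)$. Since the equation is linear and the factor $e^{-i\gamma x'}$ does not depend on $(t,y)$, one has $\hat p(t,\gamma,y,x',y')=e^{-i\gamma x'}\,p^{V}_{\gamma}(t,y,y')$, where $p^{V}_{\gamma}$ is the fundamental solution of \eqref{Ham} for these parameters. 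Proposition \ref{prop:quad-multidim} then gives
\[
p^{V}_{\gamma}(t,y,y')=\prod_{i=1}^{d}\Big[\frac{\sqrt{2i\gamma\rho_i}\,t}{\sin(\sqrt{2i\gamma\rho_i}\,t)}\Big]^{\frac12}e^{-S(t,x(\cdot))}\,p_t(0),
\]
with $x(\cdot)$ the solution of \eqref{eq:x-multidim}, i.e. $x''=-2i\gamma Dx$, subject to $x(0)=y'$ and $x(t)=y$, and $S$ the action functional \eqref{eq:S-multidim}.

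Next I would compute $S(t,x(\cdot))$ explicitly. Because $D$ is diagonal the two-point boundary value problem decouples into the $d$ scalar harmonic oscillators $x_i''=-2i\gamma\rho_i x_i$, $x_i(0)=y'_i$, $x_i(t)=y_i$, whose solutions are explicit combinations of $\cos(\sqrt{2i\gamma\rho_i}\,s)$ and $\sin(\sqrt{2i\gamma\rho_i}\,s)$; accordingly $S=\sum_{i=1}^d S_i$ with $S_i=\tfrac12\int_0^t\big((x_i')^2-2i\gamma\rho_i x_i^2\big)ds$. Integrating by parts and using the equation of motion one gets the classical boundary identity $S_i=\tfrac12\big(x_i(t)x_i'(t)-x_i(0)x_i'(0)\big)$, and inserting the explicit $x_i$ yields
\[
S_i=\frac12\,\frac{\sqrt{2i\gamma\rho_i}}{\sin(\sqrt{2i\gamma\rho_i}\,t)}\Big((y_i^2+(y'_i)^2)\cos(\sqrt{2i\gamma\rho_i}\,t)-2y_iy'_i\Big).
\]
Writing $\sqrt{2i\gamma\rho}=i\sqrt{-2i\gamma\rho}$ and using $\cos(iz)=\cosh z$, $\sin(iz)=i\sinh z$ (all the relevant expressions being even in the square roots, so the branch is immaterial) turns $-S_i$ into the exponent in \eqref{eq:def-f}. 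For the prefactor, writing $p_t(0)=(2\pi t)^{-d/2}=\prod_{i=1}^d(2\pi t)^{-1/2}$ and cancelling a factor $t^{1/2}$ against the $t$ inside each bracket turns $\big[\tfrac{\sqrt{2i\gamma\rho_i}\,t}{\sin(\sqrt{2i\gamma\rho_i}\,t)}\big]^{1/2}(2\pi t)^{-1/2}$ into $\tfrac1{\sqrt{2\pi}}\big[\tfrac{\sqrt{2i\gamma\rho_i}}{\sin(\sqrt{2i\gamma\rho_i}\,t)}\big]^{1/2}$. Collecting the pieces gives $\hat p(t,\gamma,y,x',y')=e^{-i\gamma x'}\prod_{i=1}^d f(\gamma,\rho_i,y_i,y'_i)$ with $f$ as in \eqref{eq:def-f}.

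Finally I would invert the Fourier transform. With the convention of the paper, the function $u(t,\cdot,y_i,y'_i)$ defined by \eqref{Inverse} is exactly the inverse Fourier transform of $f(\cdot,\rho_i,y_i,y'_i)$; hence by the convolution theorem $\mathcal P_{i=1}^d u(t,\cdot,y_i,y'_i)$ has Fourier transform $\prod_{i=1}^d f(\gamma,\rho_i,y_i,y'_i)$, and the extra factor $e^{-i\gamma x'}$ amounts to a translation by $x'$. Therefore
\[
p^{\|D^{1/2}y\|^2}(t,x,y,x',y')=\frac1{2\pi}\int_\R e^{i\gamma x}\hat p(t,\gamma,y,x',y')\,d\gamma=\big(\mathcal P_{i=1}^d u(t,\cdot,y_i,y'_i)\big)(x-x'),
\]
which is the claimed identity. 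The only genuinely nontrivial step is the explicit evaluation of the action functional for the two-point boundary value problem with the complex frequency $\sqrt{2i\gamma\rho_i}$, together with the bookkeeping needed to pass from the $(\sin,\cos)$ form produced by Proposition \ref{prop:quad-multidim} to the $(\sinh,\cosh)$ form of \eqref{eq:def-f}; one should also check that the oscillatory integral in \eqref{Inverse} is well defined (the $\cosh$ factor in $f$ provides exponential decay in $\gamma$ as soon as $y_i$ or $y'_i$ is nonzero, and more generally the identities may be read in the sense of tempered distributions) and that the convolution theorem applies here, which is routine.
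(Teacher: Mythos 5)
Your proposal is correct and follows essentially the same route as the paper: apply Proposition \ref{prop:quad-multidim} with $\alpha=2i\gamma$ (and $P=I_d$), evaluate the action via the boundary identity $S_i=\tfrac12(x_i(t)x_i'(t)-x_i(0)x_i'(0))$ for the decoupled complex oscillators, and then recover $p^{\|D^{1/2}y\|^2}$ by inverse Fourier transform and the convolution theorem. The only (harmless) cosmetic difference is that you first write $S_i$ in the $(\sin,\cos)$ form and then pass to the $(\sinh,\cosh)$ form, whereas the paper states the latter directly.
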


\begin{proof}
Applying Proposition \ref{prop:quad-multidim} we get 
\begin{equation*}\hat{p}(t,\gamma,y,x',y')=e^{-i\gamma x'}\prod_{i=1}^d\Big[\frac{\sqrt{2i\gamma\rho_i}t}{\sin(\sqrt{2i\gamma\rho_i}t)}\Big]^{\frac12}
e^{-S(t,x(\cdot))}p_t(0)\end{equation*} 
where $x:[0,t]\to\R^d$ is solution of the equation
$$x''=-2i\gamma Dx,$$ with boundary conditions $x(0)=y'$ and $x(t)=y$ and where
$$\begin{array}{l}
\displaystyle S(t,x(\cdot)))\\\\
\displaystyle \quad =\frac12\sum_{i=1}^d\big[\int_0^t x'_i(s)x'_i(s)ds-2i\gamma\rho_i\int_0^t x^2_i(s)ds\big]=\frac12\sum_{i=1}^d(x'_i(t)x_i(t)-x'_i(0)x_i(0))\\\\
\displaystyle \quad =\frac12\sum_{i=1}^d\frac{\sqrt{-2i\gamma\rho_i}}{\sinh(\sqrt{-2i\gamma\rho_i}t)}(((y'_i)^2+y^2_i)\cosh(\sqrt{-2i\gamma\rho_i}t)-2y'_iy_i).
\end{array}$$
%Note that $\sqrt{\pm i\gamma}=\sqrt{\pm i}\sqrt{\gamma}$
%(taking $-\sqrt{\pm i}\sqrt{\gamma}$ does not change anything to the result) and that one may take 
%$\sqrt{i}=\frac{1+i}{\sqrt{2}}$ and 
%$\sqrt{-i}=\frac{1-i}{\sqrt{2}}$ and $\sqrt{\gamma}=i\sqrt{|\gamma|}$ if $\gamma<0$. 

Noting that
$$\begin{array}{l}
\displaystyle e^{-i\gamma x'}\prod_{i=1}^d\Big[\frac{\sqrt{2i\gamma\rho_i}t}{\sin(\sqrt{2i\gamma\rho_i}t)}\Big]^{\frac12}
e^{-S(t,x(\cdot))}p_t(0)\\\\
\displaystyle \quad =\frac1{(2\pi)^{\frac d2}}e^{-i\gamma x'}\prod_{i=1}^d\Big[\frac{\sqrt{2i\gamma\rho_i}}{\sin(\sqrt{2i\gamma\rho_i}t)}\Big]^{\frac12}e^{-\frac12\frac{\sqrt{-2i\gamma\rho_i}}{\sinh(\sqrt{-2i\gamma\rho_i}t)}(((y'_i)^2+y^2_i)\cosh(\sqrt{-2i\gamma\rho_i}t)-2y'_iy_i)}.
\end{array}$$
and applying the inverse Fourier transform we get  
$$\begin{array}{l} 
\displaystyle p^{||D^{\frac12}y||^2}(t,x,y,x',y')\\\\
\displaystyle \quad =\frac1{2\pi}p_t(0)\int_\R e^{i\gamma(x-x')}\prod_{i=1}^d\Big[\frac{\sqrt{2i\gamma\rho_i}t}{\sin(\sqrt{2i\gamma\rho_i}t)}\Big]^{\frac12}
e^{-S(t,x(\cdot))}d\gamma\\\\
\displaystyle \quad =\frac1{(2\pi)^{\frac d2+1}}\int_\R e^{i\gamma(x-x')}\prod_{i=1}^d\Big[\frac{\sqrt{2i\gamma\rho_i}}{\sin(\sqrt{2i\gamma\rho_i}t)}\Big]^{\frac12}e^{-\frac12\frac{\sqrt{-2i\gamma\rho_i}}{\sinh(\sqrt{-2i\gamma\rho_i}t)}(((y'_i)^2+y^2_i)\cosh(\sqrt{-2i\gamma\rho_i}t)-2y'_iy_i)} d\gamma.
\end{array}$$
Defining $f(\gamma,\rho,y_i,y'_i)$ as in \eqref{eq:def-f}
 we have
$$p^{||D^{\frac12}y||^2}(t,x,y,x',y')=\frac1{2\pi}\int_\R e^{i\gamma(x-x')}\prod_{i=1}^d f(\gamma,\rho_i,y_i,y'_i)d\gamma.$$
Defining $u(t,x,y_i,y'_i)$ as in \eqref{Inverse}
we get
$$p^{||D^{\frac12}y||^2}(t,x,y,x',y')=(\mathcal P_{i=1}^du(t,\cdot,y_i,y'_i))(x-x').$$
Hence, the computation of $p^{||D^{\frac12}y||^2}(t,x,y,x',y')$ amounts to the computation of the integral appearing in (\ref{Inverse}).
\end{proof}

\begin{Remark}
Note that in \cite[Theorem 3.4]{Ca:Ca}, a similar formula was obtained (for $\alpha=\gamma\in\R$) also expressed by means of an integral of a complex valued function that is not explicitly computable.
%(let us remark a little misprint in that formula, the term $\sqrt{\frac1{\sinh(at)}}$ inside of the integral is missed).
\end{Remark}

%\begin{Remark} \label{Nota2} W point out that the solution thus found is %expressed by means of an integral of a complex valued function that is not %explicitly calculable. \end{Remark}

\section{Ornstein-Ulhenbeck generator}\label{sec:OU}
In this section we solve partial differential equations obtained via F-K formulas from the Ornstein-Uhlenbeck generator. Once more the partial differential equations we consider are particular case of \eqref{Kolm}.

\subsection{Fundamental solution to the Ornstein-Uhlenbeck parabolic equation with linear potential}
In all this section, we consider $d=1$ and $\tilde d =0$.
Let us consider the Ornstein-Uhlenbeck process, defined as the solution of the stochastic differential equation
\begin{equation}\label{OU}
dY(t)=dW(t)-\zeta Y(t)dt \, , \quad Y(0)=w.
\end{equation}

The infinitesimal generator of this process is 
$\displaystyle \frac12\frac{\partial^2}{\partial^2 y}-\zeta y\frac{\partial}{\partial y}.$
We know from F-K formula that function 
$u(t,y)=\E^y[e^{\alpha\int_0^t V(Y(s))ds} f(Y(t))]$ is solution of the following partial differential equation
$$\left\{
\begin{array}{l}\label{OUeq}
   \displaystyle \frac{\partial u}{\partial t}=\frac12\frac{\partial^2u}{\partial^2 y}-\zeta y\frac{\partial u}{\partial y}+\alpha V(y) u\\\\
    \nonumber  u(0,y)=f(y).
\end{array}\right.$$

We aim in this section at computing an explicit formula for the fundamental solution of the above equation that we denote by $p_{OU}^V$. To easier the computations, we focus on the linear case $V(y)=y$. Our result is stated in Proposition \ref{prop:OU}.

\begin{proposition}\label{prop:OU}
Let $V(y)=y$. We have, for any $\alpha\in\C$,
\begin{equation} \label{OUFS}
p_{OU}^V(t,y,z)=e^{\alpha\frac{y}{\zeta}(1-e^{-\zeta t})}e^{\frac{\alpha}{\zeta}(z-ye^{-\zeta t})\tanh(\frac\zeta2t)}e^{\frac12\alpha^2\sigma^2_{\xi(t)}}\sqrt{\frac\zeta\pi}\frac{e^{-\frac{\zeta(z-ye^{-\zeta t})^2}{(1-e^{-2\zeta t})}}}{\sqrt{1-e^{-2\zeta t}}}
\end{equation}
with $\sigma^2_{\xi(t)}=\sigma^2_{Z(t)}-\frac1{2\zeta^3}\frac{(1-e^{-\zeta t})^3}{(1+e^{-\zeta t})}$ and 
$\sigma^2_{Z(t)}=\frac1{\zeta^2}\int_0^t(1-e^{-\zeta(t-u)})^2du$.
\end{proposition}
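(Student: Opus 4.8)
**The plan is to follow the same Feynman–Kac + Gaussian-regression recipe used for Proposition \ref{scalar}, adapted to the Ornstein–Uhlenbeck dynamics.**

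First I would write the fundamental solution as a conditional expectation, exactly as in the derivation of \eqref{fundamental}. Since the OU process \eqref{OU} started at $y$ can be written explicitly as $Y(s) = y e^{-\zeta s} + e^{-\zeta s}\int_0^s e^{\zeta r}\,dW(r)$, the time-$t$ value $Y(t)$ is Gaussian with mean $y e^{-\zeta t}$ and variance $\sigma_{OU}^2(t)=\tfrac1{2\zeta}(1-e^{-2\zeta t})$; this immediately produces the Gaussian factor $\sqrt{\zeta/\pi}\, e^{-\zeta(z-ye^{-\zeta t})^2/(1-e^{-2\zeta t})}/\sqrt{1-e^{-2\zeta t}}$ in \eqref{OUFS} as the OU transition density $p_{OU}(t,y,z)$. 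It then remains to compute $p_{OU}^V(t,y,z) = \E^y\big[e^{\alpha\int_0^t Y(s)\,ds}\,\big|\,Y(t)=z\big]\, p_{OU}(t,y,z)$.

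Next I would handle the conditional expectation. Set $Z(t)=\int_0^t Y(s)\,ds$; since $(Z(t),Y(t))$ is jointly Gaussian, I would perform the linear regression $Z(t)=m(t)+\zeta(t)\,(Y(t)-ye^{-\zeta t}) + \xi(t)$ with $\xi(t)\perp Y(t)$, where $m(t)=\E^y[Z(t)] = \tfrac{y}{\zeta}(1-e^{-\zeta t})$. The regression coefficient is $\zeta(t)=\Cov(Z(t),Y(t))/\Var(Y(t))$, and a direct computation of $\Cov(Z(t),Y(t))$ via Fubini on the explicit OU representation should give the coefficient that, after conditioning on $Y(t)=z$, yields the factor $e^{\frac{\alpha}{\zeta}(z-ye^{-\zeta t})\tanh(\zeta t/2)}$ — so I expect $\zeta(t)=\tfrac1\zeta\tanh(\zeta t/2)$. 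The residual variance is then $\sigma^2_{\xi(t)} = \Var(Z(t)) - \zeta(t)^2\Var(Y(t)) = \sigma^2_{Z(t)} - \tfrac1{2\zeta^3}\tfrac{(1-e^{-\zeta t})^3}{1+e^{-\zeta t}}$, matching the statement once one checks $\zeta(t)^2\sigma_{OU}^2(t) = \tfrac1{2\zeta^3}(1-e^{-\zeta t})^3/(1+e^{-\zeta t})$. Then $\E^y[e^{\alpha Z(t)}\mid Y(t)=z] = e^{\alpha m(t)}\, e^{\alpha\zeta(t)(z-ye^{-\zeta t})}\, e^{\frac12\alpha^2\sigma^2_{\xi(t)}}$ by the moment-generating function of the Gaussian $\xi(t)$, which assembles into exactly \eqref{OUFS}.

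The main obstacle is bookkeeping rather than conceptual: one must carry out the covariance computations $\Cov(Z(t),Y(t))$ and $\Var(Z(t))$ cleanly and then verify the trigonometric-hyperbolic simplifications, in particular that $\Cov(Z(t),Y(t))/\Var(Y(t))$ collapses to $\tfrac1\zeta\tanh(\zeta t/2)$ and that the correction term in $\sigma^2_{\xi(t)}$ takes the stated closed form $\tfrac1{2\zeta^3}(1-e^{-\zeta t})^3/(1+e^{-\zeta t})$; the identity $(1-e^{-2\zeta t})\tanh^2(\zeta t/2) = (1-e^{-\zeta t})^3/(1+e^{-\zeta t})$ is the key algebraic step. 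One should also note that $\sigma^2_{Z(t)}$ is written in the statement in the integral form $\tfrac1{\zeta^2}\int_0^t(1-e^{-\zeta(t-u)})^2\,du$, which follows directly from $Z(t)=\int_0^t Y(s)\,ds$ by exchanging integration order and computing $\Cov(Z(t),Z(t))$; I would leave it in that form rather than integrating it out. Finally, validity for all $\alpha\in\C$ is immediate since the MGF identity for a Gaussian extends analytically to complex arguments, and Hypothesis \eqref{hipotesis} with $V(y)=y$ holds in the sub-exponential OU setting.
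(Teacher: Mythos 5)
Your proposal is correct and follows essentially the same route as the paper's proof: Feynman--Kac conditioning on $Y(t)$, the explicit OU representation $Y(t)=ye^{-\zeta t}+\int_0^t e^{-\zeta(t-s)}dW(s)$ giving the Gaussian factor, and a Gaussian regression of $\int_0^t Y(s)\,ds$ on $Y(t)$ with coefficient $\frac1\zeta\tanh(\frac\zeta2 t)$ and residual variance $\sigma^2_{\xi(t)}$, finished by the (complex) Gaussian moment-generating function. The only cosmetic difference is that the paper centers $Z(t)$ by subtracting $\frac{y}{\zeta}(1-e^{-\zeta t})$ before regressing, whereas you keep the mean inside the regression; the computations are identical.
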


\begin{proof}
We have
$$\int_\R p_{OU}^V(t,y,z)f(z)dz=\E^y[e^{\alpha\int_0^tY(s)ds}f(Y(t))]
=\int_\R \E^y[e^{\alpha\int_0^tY(s)ds}|Y(t)=z]f(z)p_t^{Y(t)}(z)dz$$
where $p_t^{Y(t)}$ denotes the density function of the random variable $Y(t)$ with $Y=\left(Y(t)\right)_{t \geq 0}$ is the process solution of (\ref{OU}). It is well known that $Y(t)=ye^{-\zeta t}+\int_0^te^{-\zeta(t-s)}dW(s)$ so that 
$Y$ is a Gaussian process whose mean and covariance functions are  $m(t)=ye^{-\zeta t}$ and $r(t,t+h)=e^{-\zeta h}\frac{(1-e^{-2\zeta t})}{2\zeta}$ respectively. Thus the density of the random variable $Y(t)$ is
$$p^{Y(t)}(z)=\sqrt{\frac\zeta\pi}\frac{e^{-\frac{\zeta(z-ye^{-\zeta t})^2}{(1-e^{-2\zeta t})}}}{\sqrt{1-e^{-2\zeta t}}} \, \cdot$$

As previously, we use a regression to link the centered Gaussian random variable 
$\displaystyle Z(t):=\int_0^tY(s)ds-\frac{y}{\zeta}(1-e^{-\zeta t})=\int_0^t\int_0^se^{-\zeta(s-u)}dW(u)ds=\frac1\zeta\int_0^t(1-e^{-\zeta(t-u)})dW(u)$
%\\=\int_0^t\int_u^te^{-\zeta(s-u)}dsdW(u)$ 
whose variance is
$\sigma^2_Z(t)=\frac1{\zeta^2}\int_0^t(1-e^{-\zeta(t-u)})^2du$
with the centered Gaussian random variable $\displaystyle M(t):=Y(t)-ye^{-\zeta t}$. 
With these notation, the regression model writes
$$Z(t)=\omega(t) M(t)+\xi(t), \quad \xi(t)\perp M(t)$$ with $$\omega(t)=\frac{\E[Z(t) M(t)]}{\E[M^2(t)]}=\frac1{\zeta} \frac{[1-e^{-\zeta t}]}{1+e^{-\zeta t}}=\frac1{\zeta}\tanh(\frac\zeta2t),$$ as
$\displaystyle \E[Z(t) M(t)]=\E[\frac1\zeta\int_0^t(1-e^{-\zeta(t-s_1)})dW(s_1)\int_0^te^{-\zeta(t-s_2)}dW(s_2)]=\frac1{2\zeta^2}(1-e^{-\zeta t})^2
%\\=\frac1\zeta\int_0^t(1-e^{-\zeta(t-s)})e^{-\zeta(t-s)}ds=-\frac1{2\zeta^2}(1-e^{-\zeta t})^2
$ and
$\displaystyle \E[M^2(t)]=\frac{(1-e^{-2\zeta t})}{2\zeta}$.
We also compute
$\displaystyle \sigma^2_{\xi(t)}:=\E[\xi^2(t)]=\E[Z^2(t)]-\omega^2(t)\E[M^2(t)]=\sigma^2_{Z(t)}-\frac1{2\zeta^3}\frac{(1-e^{-\zeta t})^3}{(1+e^{-\zeta t})}$.
Then we get\\
$\displaystyle \E^y[e^{\alpha\int_0^tY(s)ds}|Y(t)=z]=e^{\alpha\frac{y}{\zeta}(1-e^{-\zeta t})}\E[e^{\alpha Z(t)}|M(t)=z-ye^{-\zeta t}]$\\\\
$\displaystyle  =  e^{\alpha\frac{y}{\zeta}(1-e^{-\zeta t})}e^{\frac{\alpha}{\zeta}(z-ye^{-\zeta t})\tanh(\frac\zeta2t)}\E[e^{\alpha\xi(t)}]
=  e^{\alpha\frac{y}{\zeta}(1-e^{-\zeta t})}e^{-\frac{\alpha}{\zeta}(z-ye^{-\zeta t})\tanh(\frac\zeta2t)}e^{\frac12\alpha^2\sigma^2_{\xi(t)}}$ which leads
to Formula \eqref{OUFS} for $p^V_{OU}(t,y,z)$.
\end{proof}

\subsection{Application to the computation of the solution to the Ornstein-Uhlenbeck Kolmogorov hypoelliptic equation}

In this section, we consider the following partial differential equation
$$\left\{
\begin{array}{l}\label{HEeq}
  \nonumber \displaystyle \frac{\partial u}{\partial t}=\frac12\frac{\partial^2u}{\partial^2y}-\zeta y\frac{\partial u}{\partial y}-y\frac{\partial u}{\partial x}\\\\
     \nonumber  u(0,x,y)=f(x,y).
\end{array}\right.$$
This equation is a particular case of \eqref{Kolm} with $d=1$, $\tilde d=1$, $V \equiv 0$.
We aim at computing its fundamental solution, denoted by $p_{OU}^0$.

By taking the Fourier transform with respect to the variable $x$ we obtain the parabolic equation
$$\left\{
\begin{array}{l}
\displaystyle \frac{\partial \widehat{p_{OU}^0}}{\partial t}=\frac12\frac{\partial^2\widehat{p_{OU}^0}}{\partial^2y}-\zeta y\frac{\partial \widehat{p_{OU}^0}}{\partial y}-i\gamma y \widehat{p_{OU}^0}\\\\
\displaystyle \widehat{p_{OU}^0}(0,\gamma,y)=e^{-ix' \gamma} \delta_{y'}(y).
\end{array}
\right.
$$

Using Proposition \ref{prop:OU} we get
\begin{eqnarray*}\widehat{p_{OU}^0}(t,\gamma,y)&=&e^{-i\gamma x'} e^{-i\gamma\frac{y}{\zeta}(1-e^{-\zeta t})}e^{\frac{i\gamma}{\zeta}[(y'-ye^{-\zeta t})\tanh(\frac\zeta2t)]}e^{-\frac12\gamma^2\sigma^2_{\xi(t)}}\sqrt{\frac\zeta\pi}\frac{e^{-\frac{\zeta(y'-ye^{-\zeta t})^2}{(1-e^{-2\zeta t})}}}{\sqrt{1-e^{-2\zeta t}}}\\
&=&e^{-i\gamma \big[x'+\frac y{\zeta}(1-e^{-\zeta t})-\frac{1}{\zeta}(y'-ye^{-\zeta t})\tanh(\frac\zeta2t)\big]}e^{-\frac12\gamma^2\sigma^2_{\xi}(t)}\sqrt{\frac\zeta\pi}\frac{e^{-\frac{\zeta(y'-ye^{-\zeta t})^2}{(1-e^{-2\zeta t})}}}{\sqrt{1-e^{-2\zeta t}}}\cdot
\end{eqnarray*}
Then, applying the inverse Fourier transform we get
$$
p_{OU}^0(t,x,y,x',y')=\frac{\sqrt{6}\sqrt{\zeta}}{\pi t^{3/2}}\;e^{-\frac{(x-x'-\frac y{\zeta}(1-e^{-\zeta t})+\frac{1}{\zeta}(y'-ye^{-\zeta t})\tanh(\frac\zeta2t)])^2}{\frac{t^3}{6}}}\; \frac{e^{-\frac{\zeta(y'-ye^{-\zeta t})^2}{(1-e^{-2\zeta t})}}}{\sqrt{1-e^{-2\zeta t}}}\cdot
$$

\section{Small time approximation}\label{smallt}
Let us come back to the study of the Kolmogorov hypoelliptic equation introduced in \eqref{Kolm-back}. In Section \ref{KHEsection}, we focused on the study of such equations, in the particular case where $\tilde d =1$, $d \geq 1$, $b(\cdot) \equiv 0$ and $c(\cdot)$ is linear or quadratic. The aim of the present section is to study the more general case where  $\tilde d =1$, $d \geq 1$, $b(\cdot) \equiv 0$ but $c(\cdot)$ is not necessarily linear nor quadratic. That is we consider the solution $p^c(t,x,y,x',y')$ to the KHE
\begin{equation}\label{Kolm3}
  \left\{\begin{array}{l}
  \partial_tp(t,x,y,x',y') =  \frac12\Delta_yp(t,x,y,x',y')+c(y)\partial_xp(t,x,y,x',y') , \\ \qquad \qquad \qquad \qquad \qquad \qquad \qquad  \qquad \qquad (t,x,y)\in\R^{+}\times\R\times\R^d\\
  p(0,x,y,x',y') =   \delta_{x'}(x)\otimes\delta_{y'}(y),\quad (x,y)\in\R\times\R^d,
\end{array}\right.
\end{equation}
with $c(\cdot)$ not necessarily linear nor quadratic.
%$|c(y)|\le C \left(1+||y||^2\right)$%
%{\color{black}
%Question (sans les accents): Cette hypothese sur $c$ %est-elle judicieuse ici ? Vu que ce qui va nous %interesser dans (24) c'est une situation ou dans la %Remarque 1 on a $\beta=0$. Donc $c$ bornee par en bas %suffit a ce stade non ?
%}
{\color{black}Then our main result, stated in Theorem \ref{thm:approx}, is an approximation result in small time of the semigroup $(P^c_t)$ defined as $P_t^cf(x,y) = \int_{\mathbb{R}\times \mathbb{R}^d} f(x',y')p^c(t,x,y,x',y')dx'dy'$. Note that this semigroup is associated to the SDE:
\begin{equation*}
\begin{cases}
dX(t)  =  c(Y(t))dt \\
dY(t)  =  dW(t)
\end{cases} 
\end{equation*}
with $W$ a $d$-dimensional standard Brownian motion. It corresponds to Equation \eqref{hypo1} mentioned in the introduction, with $b\equiv 0$.} 
%We will first give heuristic arguments on how to guess an approximation $\overline{p}^c$ of the kernel~$p^c$ (see {\color{red}Proposition} \ref{conj:1}). Then a rigorous approximation result
%will be stated in Theorem \ref{thm:approx}.
{\color{black}The proof of Theorem \ref{thm:approx} relies on a conjecture giving an expansion in small time of $p^c$ solution of \eqref{Kolm3}. The computations leading to this conjecture are presented in the four steps below.}\\

%{\bf Heuristic arguments -}\\
{\bf Step 1: Fourier transform. }We first take the Fourier transform of Equation \eqref{Kolm3} with respect to the variable $x$, and get for any $\gamma\in\R$:
\begin{equation}\label{Kolm4}
  \left\{\begin{array}{l}
\displaystyle \partial_t\hat p(t,\gamma,y,x',y')=\frac12\Delta_y\hat p(t,\gamma,y,x',y')+i\gamma c(y) \hat p(t,\gamma,y,x',y') \\
 \hat p(0,\gamma,y,x',y')=e^{-ix'\gamma}\delta_{y'}(y).
\end{array}\right.
\end{equation}
The solution of Eq. \eqref{Kolm4}, given in (\ref{fundamental}), is recalled hereafter:
 $$\widehat{p^c}(t,\gamma,y,x',y')=e^{-i\gamma x'}\E[e^{i\gamma\int_0^tc(y+W(s))ds}|W(t)=y'-y]p_t(y'-y).$$
 Once more, we introduce a regression model:
 %{\color{Green}Faut-il alourdir la notation comme ça?}
 $$W(s)=\frac{s}{t}W(t)+\sqrt{t}\mathbf b\left(\frac{s}{t}\right)
%\alpha(t,s)W_i(t)+Z_i(t,s)
,\, 0 \le s\le t,$$
where the equality is an equality in probability distribution and where $\mathbf b(\cdot)$ is a multidimensional Brownian bridge $\mathbf b(s)=(b_1(s),\ldots, b_d(s))$, having in each coordinate  independent Brownian bridges that are also independent from $W(t)$.
 %{\color{red}A faire : reprendre les notations \`a partir de l\`a ! D'o\`u vient la formule [15] p.40-41 ?}
%As we have seen above  we have
 %$$W_i(ts)=\frac st W_i(t)+ b_i(s),\, 0\le s\le t,$$ the equality is in law and the $b_i$s are independent Brownian bridges, independent from $W(t)$. 
We get (see also \cite[p. 45]{Si:Si}):
$$\displaystyle \widehat{p^c}(t,\gamma,y,x',y')=e^{-i\gamma x'}p_t(y'-y)\E[e^{it\gamma\int_0^1c(y+s(y'-y)+\sqrt t\mathbf b(s))ds}]$$
 
 \medskip
 
 We assume in the following that $c$ is three times continuously  differentiable and the derivatives satisfy $||c'(y)||\vee ||c''(y)|| \vee ||c'''(y)||\le  C(1+||y||^2)$ for $k=0,1,2,3$ and $ C$ a generic constant. 
 \begin{Remark}Note that it is possible relax the above assumption to $||c'(y)||\vee ||c''(y)|| \vee ||c'''(y)|| \le P(||y||)$ with $P$ a polynomial with positive coefficients, leading to similar results but more complex expressions. 
 \end{Remark} 
 
 \vspace{0.2cm}
 {\bf Step 2: a first Taylor expansion.} The assumption that $c$ is two times differentiable allows to apply  a Taylor's expansion, leading to:
\begin{equation*}
\int_0^1c(y+s(y'-y)+\sqrt t\mathbf b(s))ds
 =c(y)+\frac12<c'(y),y'-y>+\sqrt t\int_0^1<c'(y),\mathbf b(s)>ds+
F(y,y',\sqrt t\mathbf b(\cdot))\end{equation*}
where we have denoted by $\displaystyle F(y,y',\sqrt t\mathbf b(\cdot))$ the integral form of the remainder term, that is\\
 $$\int_0^1\int_0^1 (s(y'-y)+\sqrt t\mathbf b(s)))^Thc''(y+(1-h)[s(y'-y)+\sqrt t\mathbf b(s)])(s(y'-y)+\sqrt t\mathbf b(s)))dhds.$$
 {\color{black}Note that for $c$ affine, this remainder term is null. Therefore we focus in the following on the case where $c$ is non affine.}
 We then write:
$$ \E[e^{it\gamma\int_0^1c(y+s(y'-y)+\sqrt t\mathbf b(s))ds}]=e^{it\gamma( c(y)+\frac{1}2<c'(y),y'-y>)}\E[e^{it^{\frac32}\gamma<c'(y),\int_0^1\mathbf b(s )ds>}e^{it\gamma F(y,y',\sqrt t\mathbf b(\cdot))}]\cdot$$

The variable  $<c'(y),\int_0^1\mathbf b(s )ds>$ is a mean zero Gaussian random variable with variance equal to\\ $\E[(\int_0^1b_1(s)ds)^2]||c'(y)||^2=\frac1{12}||c'(y)||^2$, leading to:
\begin{equation} 
\label{eq:log-norm-bis}
\E[e^{it^{\frac32}\gamma <c'(y),\int_0^1\mathbf b(s )ds>}]=e^{-\frac{\frac1{12}t^3\gamma^2||c'(y)||^2}{2}}.
\end{equation}
{\color{black}Let us define $F_{y,y',\mathbf v}: \ell \mapsto F(y,y',\ell \mathbf v)$.
 For future use, we need to bound the function $F_{y,y',\mathbf v}$ and its derivatives. We first write: 
 $$F_{y,y',\mathbf v}(\ell):=\sum_{1\le i,j\le d}\int_0^1\int_0^1L_{ij}(y,y',h,s,\ell \mathbf v)ds dh,$$ where 
 $$L_{ij}(y,y',h,s,\ell \mathbf v):=(s(y-y')+\ell \mathbf v)_ihc''_{ij}(y+(1-h)[s(y-y')+\ell \mathbf v])(s(y-y')+\ell \mathbf v)_j,$$ with the $c''_{ij}$'s being the coefficients of the matrix $c''$.} Below $\mathbf C$ denotes a generic constant  which may vary from one line to another.
 Using the hypothesis satisfied by the derivatives of $c(\cdot)$, we get the following inequality for {\color{black}$0 \leq s, h \leq 1$:}
$$\begin{array}{rcl}
|L_{ij}(y,y',h,s,\ell \mathbf v)| & \le &  ||s(y-y')+\ell \mathbf v||^2{\color{black}\sup_{ij}|c''_{ij}(y+(1-h)[s(y-y')+\ell  \mathbf v])|}\\
& \le &  \mathbf C(||y-y'||^2+||\ell \mathbf v||^2)(1+||y+(1-h)s(y-y')+\ell  \mathbf v]||^2)\\
& \le & \mathbf C(||y-y'||^2+\ell^2|| \mathbf v||^2)(1+2||y+(1-h)[s(y-y')||^2+2\ell^2||\mathbf v||^2)\\
& \le & \mathbf C(C_1(y,y')+C_2(y,y') \ell^2 ||\mathbf v||^2+C_3(y,y')\ell^4||\mathbf v||^4),
\end{array}$$
where the  $C_i(y,y')$ are up to order 4 polynomials depending on $y,y'$. This yields
\begin{equation*}
%\label{cota1}
{\color{black}|F_{y,y',\mathbf v}(\ell)|}\le\mathbf C(C_1(y,y')+C_2(y,y') \ell^2 ||\mathbf v||^2+C_3(y,y')\ell^4||\mathbf v||^4)
\end{equation*}
and finally
\begin{equation}\label{cota2}|F(y,y',\sqrt t\mathbf b(\cdot))|^2{\color{black}=|F_{y,y',\mathbf b(\cdot)}(\sqrt t )|^2}\le \mathbf C(C^2_1(y,y')+C^2_2(y,y') t^2 ||\mathbf b ||^4_{\infty}+C^2_3(y,y')t^4||\mathbf b||^8_{\infty}).
\end{equation}

{\color{black}We now bound the derivative of $F_{y,y',\mathbf v}$.
This derivative writes
$$F'_{y,y',\mathbf v}(\ell)=\sum_{1\le i,j\le d}\int_0^1\int_0^1\partial_{\ell} L_{ij}(y,y',h,s,\ell \mathbf v)ds dh,$$}
{\color{black}with $\partial_{\ell} L_{ij}(y,y',h,s,\ell \mathbf v)$ denoting the first-order derivative of $\ell \mapsto L_{ij}(y,y',h,s,\ell \mathbf v)$. Thus, to bound $F'_{y,y',\mathbf v}(\ell)$, we first bound $\partial_{\ell} L_{ij}(y,y',h,s,\ell \mathbf v)$:}
$$\begin{array}{lcl}
|\partial_{\ell} L_{ij}(y,y',h,s,\ell \mathbf v)| & \leq & |v_ihc''_{ij}(y+(1-h)[s(y-y')+\ell  \mathbf v])(s(y-y')+\ell \mathbf v)_j|\\\\
&  & \, +|v_jhc''_{ij}(y+(1-h)[s(y-y')+\ell \mathbf v])(s(y-y')+\ell \mathbf v)_i|\\\\
& & \,  +|(s(y-y')+\ell \mathbf v)_i(s(y-y')+\ell \mathbf v)_j\sum_{k=1}^d hc'''_{ijk}(y+(1-h)[s(y-y')+\ell \mathbf v])v_k|.\end{array}$$
Using again the bound for $||c''(\cdot)||$ we obtain the following upper bound for the two first terms:
$$  C_4(y,y')\left(||\mathbf v||+\ell ||\mathbf v||^2+\ell^2 ||\mathbf v||^3+\ell^3 ||\mathbf v||^4\right)$$
with $C_4(y,y')$ a polynomial of order 4.
For bounding the third term we use the  bound on $||c'''(\cdot)||$. It finally leads to:
$$\begin{array}{ll}
|(s(y-y')+\ell \mathbf v)_i(s(y-y')+\ell \mathbf v)_j\sum_{k=1}^d hc'''_{ijk}(y+(1-h)[s(y-y')+\ell \mathbf v])v_k|&\\ 
\le C_5(y,y')\left(1 + \ell^2||\mathbf v||^2+\ell^4||\mathbf v||^4\right)&
%\; \; \le\mathbf C( ||s(y-y')+\ell\mathbf v||^2(C_4(y,y')+\ell^2||\mathbf v||^2))\le\mathbf C(C_7(y,y')+C_8(y,y')\ell^2||\mathbf v||^2+\ell^4||\mathbf v||^4).
\end{array}$$
with $C_5(y,y')$ a polynomial of order 4.
Summing up we have
\begin{equation}
\label{eq:cont-dF}
|{\color{black}F'_{y,y',\mathbf v}(\ell)}|\le C(y,y') \left(1 + ||\mathbf v||+\ell ||\mathbf v||^2+\ell^2||\mathbf v||^2+\ell^2 ||\mathbf v||^3+\ell^3 ||\mathbf v||^4+\ell^4||\mathbf v||^4\right),
%\mathbf C(C_5(y,y')+C_6(y,y')\ell^2||\mathbf v||^2+\ell^4||\mathbf v||^4+C_7(y,y')+C_8(y,y')\ell ||\mathbf v||^2+C_9(y,y') \ell^2||\mathbf v||^2+\ell^4||\mathbf v||^4),
\end{equation}
with $C(y,y')$ a polynomial in $y,y'$, of order 4.

\vspace{0.4cm}

{\bf Step 3: a second Taylor expansion.}
Now we consider the following expansion:

$$\begin{array}{lcl}
\frac{1}{t} \left(\widehat{ p^c}(t,\gamma,y, x',y')-e^{-i\gamma x'}e^{it\gamma( c(y)+\frac{1}2<c'(y),y'-y>)}e^{-\frac12(\frac1{12}t^3\gamma^2||c'(y)||^2)}\right)p_t(y-y')&&\\\\
\;  =e^{-i\gamma x'}e^{it\gamma( c(y)+\frac{1}2<c'(y),y'-y>)}\E[e^{it^{\frac32}\gamma<c'(y),\int_0^1\mathbf b(s )ds>}\frac{e^{it\gamma F(y,y',\sqrt t\mathbf b(\cdot))}-1}{t}] p_t(y-y')&&\\\\
\;=e^{-i\gamma x'}e^{it\gamma( c(y)+\frac{1}2<c'(y),y'-y>)}\E[e^{it^{\frac32}\gamma <c'(y),\int_0^1\mathbf b(s )ds>}(\frac{e^{i\gamma t F(t,z,y,\sqrt t\mathbf b(\cdot))}-1-it\gamma F(y,y',\sqrt t\mathbf b(\cdot))}t)]p_t(y'-y)\\\\
\;+e^{-i\gamma x'}e^{it\gamma( c(y)+\frac{1}2<c'(y),y'-y>)}\E[e^{it^{\frac32}\gamma <c'(y),\int_0^1\mathbf b(s )ds>}(i\gamma F(y,y',\sqrt t\mathbf b(\cdot))]p_t(y'-y)=I_1+I_2.\\\\
\end{array}$$
Let us study each term in the above sum separately. Applying the following inequality, $\forall \, t>0 , \, \forall \, x\in\R,$ 
$|e^{itx}-1-itx|\le\frac{t^2x^2}2$, we obtain:
\begin{equation}\label{ref:eqnew}
|\frac{e^{it\gamma F(y,y',\sqrt t\mathbf b(\cdot))}-1-it\gamma F(y,y',\sqrt t\mathbf b(\cdot))}{t}|\le \frac{t\gamma^2F^2(y,y',\sqrt t\mathbf b(\cdot))}2\cdot \end{equation}
Then we bound $I_1$ using \eqref{cota2}, \eqref{ref:eqnew} and the integrability of $||\mathbf b||^\alpha_{\infty}$ for $\alpha>0$:
\begin{equation}\label{cota8}|I_1|\le t\gamma^2\mathbf C\E[(C^2_1(y,y')+C^2_2(y,y') t^2 ||\mathbf b ||^4_{\infty}+C_3^2(y,y')t^4||\mathbf b||^8_{\infty})].\end{equation}
{\color{black}For bounding $I_2$, we use a Taylor expansion of $F_{y,y', \mathbf b(\cdot)}$:
$$F_{y,y', \mathbf b(\cdot)}(\sqrt t) =F_{y,y', \mathbf b(\cdot)}(0)+\sqrt t  \int_0^1{\color{black}F'_{y,y', \mathbf b(\cdot)}(z\sqrt{t})} \mathbf dz.$$}
Then, {\color{black}using \eqref{eq:cont-dF}}, we get:
\begin{equation}\label{cota9}
 \begin{array}{ll}
 |\sqrt t \int_0^1{\color{black}F'_{y,y', \mathbf b(\cdot)}(z\sqrt{t})} \mathbf dz|&\\
 \le \sqrt t C(y,y')\left(1+||\mathbf b||_\infty+\sqrt t ||\mathbf b||_\infty^2+t ||\mathbf b||_\infty^2+t ||\mathbf b||_\infty^3+t^{\frac 32} ||\mathbf b||_\infty^4+t^2 ||\mathbf b||_\infty^4\right),&
 \end{array}
    %\mathbf C\sqrt t(C_7(y,y')+C_8(y,y')t ||\mathbf b||_\infty^2+t^2 ||\mathbf b||_\infty^4)||\mathbf b||_\infty.
    \end{equation}
    with $C(y,y')$ a polynomial in $y$, $y'$ up to order 4.
Using again the integrability of $||\mathbf b||^\alpha_\infty$, we finally get:
\begin{equation}
\label{eq:Eeit3}
\E[e^{it^{\frac32}\gamma <c'(y),\int_0^1\mathbf b(s )ds>}(i\gamma F(y,y',\sqrt t\mathbf b(\cdot))]
=i\gamma F(y,y',0)\E[e^{it^{\frac32}\gamma <c'(y),\int_0^1\mathbf b(s )ds>}]+\gamma \mathcal{O}(\sqrt t).\end{equation}

%Summing up we obtain
%$$\begin{array}{lcl}
%\frac{1}{t} \left(\widehat{ p^c}(t,\gamma,y, x',y')-e^{-i\gamma x'}e^{it\gamma( c(y)+\frac{1}2<c'(y),y'-y>)}e^{-\frac12(\frac1{12}t^3\gamma^2||c'(y)||^2)}\right)p_t(y-y')&&\\\\
% =e^{-i\gamma x'}e^{it\gamma( c(y)+\frac{1}2<c'(y),y'-y>)}e^{-\frac12(\frac1{12}t^3\gamma^2||c'(y)||^2)} \left(i\gamma F(y,y',0)+|\gamma|\mathcal{O}\left(\sqrt{t}\right)\right) p_t(y-y')&&
%\end{array}$$

%\end{equation}

Finally, gathering all the bounds in Step 3 we obtain:
	\begin{multline}\label{eq:new}
	\frac{1}{t} \left(\widehat{ p^c}(t,\gamma,y, x',y')-e^{-i\gamma x'}e^{it\gamma( c(y)+\frac{1}2<c'(y),y'-y>)}e^{-\frac12(\frac1{12}t^3\gamma^2||c'(y)||^2)}\right)p_t(y-y')\\\\
	\; = e^{-i\gamma x'}e^{it\gamma( c(y)+\frac{1}2<c'(y),y'-y>)}\left[e^{-\frac12(\frac1{12}t^3\gamma^2||c'(y)||^2)}i\gamma H(y,y')+\gamma \mathcal{O}\left(\sqrt{t}\right)+\gamma^2 \mathcal{O}\left(t\right)\right]p_t(y-y'),\end{multline}
\begin{equation}
\label{eq:def-H}
\textup{ with }H(y,y')=F(y,y',0)=\int_0^1\int_0^1s^2(y'-y)^Thc''(y+(1-h)s(y'-y))(y'-y)dhds.
\end{equation}
\begin{Remark}
\label{rem-O}
Note that the multiplicative constant in the term $\mathcal{O}\left(\sqrt{t}\right)$ writes as a polynomial in $y,y'$, up to order $8$. Here the remainder term in $\gamma \mathcal{O}(\sqrt t)$ comes from the contribution in $\gamma \mathcal{O}(\sqrt t)$ from $I_2$ and the contribution in
$\gamma^2 \mathcal{O}(t)$ from $I_1$. We will have to consider these contributions separately in the proof of Theorem \ref{thm:approx}.
\end{Remark}
%Otherwise
%
%\bigskip
%
%$\displaystyle 
%\frac{1}{t} \left(\widehat{ p^c}(t,\gamma,y, x',y')-e^{-i\gamma x'}e^{it\gamma( c(y)+\frac{1}2<c'(y),y'-y>)}e^{-\frac12(\frac1{12}t^3\gamma^2||c'(y)||^2)}\right)p_t(y-y')
%$
%\begin{equation}\label{eq:new}
%\; = e^{-i\gamma x'}e^{it\gamma( c(y)+\frac{1}2<c'(y),y'-y>)}e^{-\frac12(\frac1{12}t^3\gamma^2||c'(y)||^2)}\left[i\gamma H(y,y')+\gamma \mathcal{O}\left(\sqrt{t}\right)+\gamma^2 \mathcal{O}\left(t^2\right)\right]p_t(y-y').
%\end{equation}

%The same commentary as in Remark \ref{rem-O} for the term $\mathcal{O}\left(\sqrt{t}\right)$ holds true here. 

\vspace{0.2cm}

{\bf Step 4: computing the inverse Fourier transform.}
From now on we assume that we are doing our computations with $y\in\R^d$ s.t. $c'(y)\neq 0$ (see the forthcoming Remark \ref{rem:y-zero}). Let us denote 
\begin{equation}
\label{eq:q}
\displaystyle q(t,x,y,x',y')=\frac{\sqrt6e^{-6\frac{((x-x')+t[c(y)+\frac12<c'(y),y'-y>])^2}{t^3||c'(y)||^2}}}{\sqrt{\pi}||c'(y)||t^{\frac32}}\,p_t(y-y').
\end{equation}
We have
$$\displaystyle \hat q(t,\gamma,y,x',y')=e^{-i\gamma x'}e^{it\gamma( c(y)+\frac{1}2<c'(y),y'-y>)}e^{-\frac12(\frac1{12}t^3\gamma^2||c'(y)||^2)}p_t(y-y').$$
Now, from \eqref{eq:new} and using then the inverse Fourier transform we conjecture that:
%$$\begin{array}{lcl}
%\frac1t \quad (p^c(t,x,y,x',y')-q(t,x,y,x',y')) &&\\\\
% =\frac1{2\pi}\int_{\R}e^{i\gamma x}(\frac{\widehat{p^c}(t,\gamma,y,x',y')-\hat q(t,\gamma,y,x',y')}t)d\gamma &&
%\\\\
% \sim_{t \sim 0}H(y,y')p_t(y-y')\frac1{2\pi}\int_\R e^{i\gamma(x-x'+t( c(y)+\frac{1}2<c'(y),y'-y>))}i\gamma e^{-\frac12(\frac1{12}t^3\gamma^2||c'(y)||^2)} d\gamma &\\\\
%=H(y,y')p_t(y-y')\frac{d}{dx}\sqrt{\frac 6{\pi t^3||c'(y)||^2}}e^{-\frac{6(x-x'+t( c(y)+\frac{1}2<c'(y),y'-y>))^2}{t^3||c'(y)||^2}}&& \\\\
% =-\frac2{\sqrt\pi}H(y,y')p_t(y-y')(\frac6{t^3||c'(y)||^2})^{\frac32} &&\\\\
%\qquad \qquad \times(x-x'+t( c(y)+\frac{1}2<c'(y),y'-y>))e^{-\frac{6(x-x'+t( c(y)+\frac{1}2<c'(y),y'-y>))^2}{t^3||c'(y)||^2}}\end{array}$$
%{\color{red}if $H(y,y')\neq 0$. Otherwise we get
	\begin{equation}\label{conconj}\begin{array}{lcl}
	\frac1t \quad (p^c(t,x,y,x',y')-q(t,x,y,x',y')) &&\\\\
	=\frac1{2\pi}\int_{\R}e^{i\gamma x}(\frac{\widehat{p^c}(t,\gamma,y,x',y')-\hat q(t,\gamma,y,x',y')}t)d\gamma &&
	\\\\
	=-\frac2{\sqrt\pi}\left[H(y,y')+\mathcal{O}\left(\sqrt{t}\right)\right]p_t(y-y')(\frac6{t^3||c'(y)||^2})^{\frac32} &&\\\\
	\qquad \qquad \times(x-x'+t( c(y)+\frac{1}2<c'(y),y'-y>))e^{-\frac{6(x-x'+t( c(y)+\frac{1}2<c'(y),y'-y>))^2}{t^3||c'(y)||^2}}&&
	\end{array}\end{equation}
	thus
	\begin{equation}\label{det}
	\frac{p^c(t,x,y,x',y')}{q(t,x,y,x',y')}
%= q(t,x,y,x',y')-\frac{2t}{\sqrt\pi}\left(H(y,y')+\mathcal{O}\left(\sqrt{t}\right)\right)p_t(y-y')(\frac6{t^3||c'(y)||^2})^{\frac32}\\
%\hspace{1.5cm}\times(x-x'+t( c(y)+\frac{1}2<c'(y),y'-y>))\exp{-(\frac{6(x-x'+t( c(y)+\frac{1}2<c'(y),y'-y>))^2}{t^3||c'(y)||^2})}\\
=1-\frac{12(x-x'+t( c(y)+\frac
12<c'(y),y'-y>))\left(H(y,y')+\mathcal{O}\left(\sqrt{t}\right)\right)}{t^{2}||c'(y)||^2}
	\end{equation}
	with $\mathcal{O}\left(\sqrt{t}\right)$ a polynomial in $y$, $y'$, up to order $8$ (see again Remark \ref{rem-O}). Note that this result is only a conjecture because we have no guarantee that the inverse Fourier transform of the remainder terms in \eqref{eq:new}, namely the terms $\gamma \mathcal{O}(\sqrt t)$ and $\gamma^2 \mathcal{O}(t)$, leads to the term $\mathcal{O}(\sqrt t)$ in \eqref{conconj}.

%Note that this result is not rigorous because in the third line we consider the inverse Fourier transform of a function which is only bounded.

\bigskip

{\color{black}Finally, Steps 1 to 4 lead to Conjecture \ref{conj:1} below:

	\begin{conj}
		\label{conj:1}
		Let $H(y,y')$ and $q(t,x,y,x',y')$ be defined respectively by \eqref{eq:def-H} and \eqref{eq:q}. \\
		
		\noindent $\bullet$ Assume $c$ is affine non constant. Let $(x,x',y,y') \in \mathbb{R}^2\times \mathbb{R}^{2d}$. Then, $$p^c(t,x,y,x',y')=q(t,x,y,x',y').$$
  Note that in the case where $c$ is linear we recover the result of Corollary \ref{cor:linearcase}.\\
	\noindent $\bullet$ 	Assume $c$ is three times continuously differentiable and satisfies $||c'(y)||\vee ||c''(y)|| \vee ||c'''(y)||\le C \left(1+||y||^2\right)$. Let $(x,x',y,y') \in \mathbb{R}^2\times \mathbb{R}^{2d}$ be such that $c'(y)\neq 0$. Define
	\begin{equation}\label{eq:p-bar}
\begin{array}{ll}
 \overline{p}^c(t,x,y,x',y')&=q(t,x,y,x',y')\left(\frac{t^2\|c'(y)\|^2-12  \left( x-x'+t( c(y)+\frac{1}2<c'(y),y'-y>)\right)H(y,y')}{t^2\|c'(y)\|^2}\right)\\\\
 &=  \frac{\sqrt{12}}{(2\pi)^{\frac{d+1}{2}}} \frac{1}{\|c'(y)\|^3 t^{\frac{7+d}{2}}} \exp \left(-\frac{\|y-y'\|^2}{2t}-\frac{6\left(x-x'+t( c(y)+\frac{1}2<c'(y),y'-y>)\right)^2}{t^3||c'(y)||^2}\right)  \\\\
% \end{equation*}
% \begin{equation}\label{eq:p-bar}
%\displaystyle 
& \; \quad\quad \times \left[t^2\|c'(y)\|^2-12  \left( x-x'+t( c(y)+\frac{1}2<c'(y),y'-y>)\right)H(y,y')\right]\cdot
\end{array}
\end{equation}
Then, for $(x,x',y,y') \in \mathbb{R}^2 \times \mathbb{R}^{2d}$ such that $12  \left( x-x'+t( c(y)+\frac{1}2<c'(y),y'-y>)\right)H(y,y') \neq 0$, we deduce from \eqref{det}:
\begin{equation}
\label{eq:p}
\frac{p^c(t,x,y,x',y')}{\overline{p}^c(t,x,y,x',y')}
=1- \mathcal{O}\left(\sqrt t \right) \frac{1}{H(y,y')}
%\frac{p^c(t,x,y,x',y')}{q(t,x,y,x',y')}
%= q(t,x,y,x',y')-\frac{2t}{\sqrt\pi}\left(H(y,y')+\mathcal{O}\left(\sqrt{t}\right)\right)p_t(y-y')(\frac6{t^3||c'(y)||^2})^{\frac32}\\
%\hspace{1.5cm}\times(x-x'+t( c(y)+\frac{1}2<c'(y),y'-y>))\exp{-(\frac{6(x-x'+t( c(y)+\frac{1}2<c'(y),y'-y>))^2}{t^3||c'(y)||^2})}\\
%=1-\frac{12(x-x'+t( c(y)+\frac
%12<c'(y),y'-y>))\left(H(y,y')+\mathcal{O}\left(\sqrt{t}\right)\right)}{t^{2}||c'(y)||^2}
\end{equation}
with $\mathcal{O}\left(\sqrt{t}\right)$ a polynomial in $y$, $y'$, up to order $8$. 
%E.g., for $x\neq x'$ and $H(y,y')\neq 0$ we get
%$$\frac{p^c(t,x,y,x',y')}{\overline{p}^c(t,x,y,x',y')}=1$$
%From \eqref{eq:p} we deduce
%$$\begin{array}{l}
%\displaystyle p^c(t,x,y,x',y')= %\frac{\sqrt{12}}{(2\pi)^{\frac{d+1}{2}}} \frac{1}{\|c'(y)\|^3 t^{\frac{7+d}{2}}}\,\exp{- \big(\frac{\|y-y'\|^2}{2t}+\frac{6\left(x-x'+t( c(y)+\frac{1}2<c'(y),y'-y>)\right)^2}{t^3||c'(y)||^2}\big)} \times\\
%\displaystyle \quad\big[t^2\|c'(y)\|^2-12\big( x-x'+t( c(y)+\frac{1}2<c'(y),y'-y>)\big)\big(H(y,y')+\mathcal{O}\big(\sqrt{t}\big)\big)\big].
%\end{array}$$
%\end{itemize}
\end{conj}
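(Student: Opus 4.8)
The idea is to turn the formal Steps 1--4 into rigorous estimates; everything is bookkeeping except one step, the control of the inverse Fourier transform of the remainder, which is precisely the point flagged as conjectural.

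For the first bullet there is nothing conjectural. When $c$ is affine, $c''\equiv 0$, so the Taylor remainder $F$ of Step~2 vanishes identically and with it $H$ and all the error terms of Step~3; the computation of Steps 1--3 is then exact, giving $\widehat{p^c}(t,\gamma,y,x',y')=\hat q(t,\gamma,y,x',y')$ for every $\gamma$, and since $\gamma\mapsto\hat q$ is Gaussian for fixed $t>0$, the inverse Fourier transform is legitimate and yields $p^c=q$. One may equally deduce this from Proposition~\ref{scalar}: if $c(y)=\langle a,y\rangle+c_0$ with $a\neq 0$, the substitution $\tilde p=e^{-i\gamma c_0 t}\hat p$ turns Equation~\eqref{Kolm4} into Equation~\eqref{Ham} in the $y$-variable with linear potential $\langle a,y\rangle$ and $\alpha=i\gamma$, and undoing the substitution together with the inverse Fourier transform gives $q$; for $c_0=0$ this is Corollary~\ref{cor:linearcase}.

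For the second bullet, fix $(x,x',y,y')$ with $c'(y)\neq 0$ and $12(x-x'+tm)H(y,y')\neq 0$, where $m:=c(y)+\tfrac12\langle c'(y),y'-y\rangle$. The heart of the matter is a quantitative form of \eqref{eq:new}. With
\[
R(t,\gamma):=\tfrac1t\bigl(\widehat{p^c}(t,\gamma,y,x',y')-\hat q(t,\gamma,y,x',y')\bigr)-e^{-i\gamma x'}e^{it\gamma m}e^{-t^3\gamma^2\|c'(y)\|^2/24}\,i\gamma H(y,y')\,p_t(y-y'),
\]
one must show $|R(t,\gamma)|\le p_t(y-y')\bigl(r_1(t,\gamma)+r_2(t,\gamma)\bigr)$ on $t\in(0,t_0]$, where $r_1$, coming from $I_1$, is of order $\gamma^2t$, and $r_2$, coming from the non-leading part of $I_2$, is of order $|\gamma|\sqrt t$, with multiplicative constants that are polynomials in $(y,y')$ of degree at most $8$. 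The pointwise-in-$\gamma$ parts of this are already in the paper: \eqref{ref:eqnew}, \eqref{cota2}, \eqref{cota8} handle $I_1$, and the Taylor expansion $F_{y,y',\mathbf b(\cdot)}(\sqrt t)=H(y,y')+\sqrt t\int_0^1 F'_{y,y',\mathbf b(\cdot)}(z\sqrt t)\,dz$ together with \eqref{eq:cont-dF}, \eqref{cota9} handle $I_2$; all the expectations are finite by Fernique's theorem, since every moment of $\|\mathbf b\|_\infty$ is finite.

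The obstacle --- and the reason for the conjectural status --- is that these bounds do not decay in $\gamma$, so $\int_\R(r_1+r_2)\,d\gamma=\infty$ and the inverse Fourier transform of the remainder cannot be controlled directly. The natural remedy is to extract decay in $\gamma$ through an integration-by-parts (Malliavin-type) argument against the Gaussian law of $V_1:=\int_0^1\langle c'(y),\mathbf b(s)\rangle\,ds\sim\mathcal N(0,\tfrac1{12}\|c'(y)\|^2)$: writing each relevant quantity as $\E[e^{it^{3/2}\gamma V_1}X]$ with $X$ a functional of the bridge $\mathbf b$ and disintegrating the law of $\mathbf b$ with respect to $V_1$, one sees that $\E[e^{it^{3/2}\gamma V_1}X]$ is the Fourier transform, at $-t^{3/2}\gamma$, of $v\mapsto\E[X\mid V_1=v]\,\phi_{V_1}(v)$, so that every derivative of this function that can be taken under the expectation (each costing one derivative of $c$, hence available here only up to an order governed by $c\in C^3$) buys one power of $(t^{3/2}|\gamma|)^{-1}$ of decay. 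Making $r_1$ and $r_2$ integrable in $\gamma$ this way, while tracking the powers of $t$ and $(y,y')$, would give $\int_\R|R(t,\gamma)|\,d\gamma=\mathcal O(\sqrt t)$ times a degree-$8$ polynomial in $(y,y')$, and then inverting the Fourier transform in \eqref{conconj} is routine: the leading term $\tfrac1{2\pi}\int_\R e^{i\gamma(x-x')}e^{it\gamma m}e^{-t^3\gamma^2\|c'(y)\|^2/24}\,i\gamma H(y,y')\,d\gamma$ is an elementary integral equal to a constant times the derivative, in $\beta=x-x'+tm$, of a Gaussian, and reproduces exactly the $H$-term of \eqref{conconj}, hence \eqref{det}; dividing \eqref{det} by $\overline{p}^c/q$ (legitimate since $12(x-x'+tm)H(y,y')\neq 0$ forces $\overline{p}^c\neq 0$ for $t$ small --- the correction factor in \eqref{eq:p-bar} blows up as $t\to 0$ when $x\neq x'$, and equals $1-\tfrac{12mH(y,y')}{t\|c'(y)\|^2}$, with $m\neq 0$, when $x=x'$) then yields \eqref{eq:p}. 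I expect this decay-in-$\gamma$ step to be the crux: whether $c\in C^3$ is enough regularity to make the $I_1$-remainder integrable in $\gamma$ is delicate, and a fully rigorous version of Conjecture~\ref{conj:1} may well require either strengthening the hypotheses on $c$ or exploiting more carefully the structure of $F$ (e.g.\ that it enters $I_1$ at quadratic order).
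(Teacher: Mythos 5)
Your treatment is essentially the paper's own: the statement is presented there only as a conjecture, supported by exactly the Steps 1--4 you reproduce, and the obstruction you isolate --- that the bounds on the remainder terms $\gamma\mathcal{O}(\sqrt t)+\gamma^2\mathcal{O}(t)$ in \eqref{eq:new} do not decay in $\gamma$, so the inverse Fourier transform leading to \eqref{conconj}--\eqref{det} cannot be justified --- is precisely the gap the paper itself flags as the reason for the conjectural status. Your affine bullet is handled the same way as in the text (for $c$ affine the Taylor remainder $F$, hence $H$ and all error terms, vanish, Steps 1--3 are exact, and inverting the Gaussian $\hat q$ is legitimate), with the small and correct addition of absorbing the constant $c_0$ by the phase $e^{-i\gamma c_0 t}$ to reduce to Proposition \ref{scalar}. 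Your proposed remedy for the general case --- disintegrating with respect to $V_1=\int_0^1\langle c'(y),\mathbf b(s)\rangle ds$ so that smoothness of $c$ buys decay in $\gamma$ --- goes beyond the paper and looks like a sensible route, but as you acknowledge it is only a sketch (with the delicate point being whether $c\in C^3$ suffices for the $I_1$-remainder), so it does not upgrade Conjecture \ref{conj:1} to a theorem; this is consistent with the paper, which likewise stops at the conjecture and only proves the weaker semigroup statement of Theorem \ref{thm:approx}, where the extra $\gamma$-powers are absorbed by derivatives of the test function $f$ rather than by decay of the kernel.
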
}

{\color{black}
Let us define 
%$\overline{P}^c_tf(x,y)$ as:
%
%\bigskip
%\noindent $\displaystyle \overline{p}^c(t,x,y,x',y')=$
%\begin{equation}\label{eq:p-bar}
%\begin{array}{ll}
% \overline{p}^c(t,x,y,x',y')=&\frac{\sqrt{12}}{(2\pi)^{\frac{d+1}{2}}} \frac{1}{\|c'(y)\|^3 t^{\frac{7+d}{2}}} \exp \left(-\frac{\|y-y'\|^2}{2t}-\frac{6\left(x-x'+t( c(y)+\frac{1}2<c'(y),y'-y>)\right)^2}{t^3||c'(y)||^2}\right)  \\\\
% \end{equation*}
% \begin{equation}\label{eq:p-bar}
%\displaystyle 
%& \quad\quad \times \left[t^2\|c'(y)\|^2-12  \left( x-x'+t( c(y)+\frac{1}2<c'(y),y'-y>)\right)H(y,y')\right]
%\end{array}
%\end{equation}
%and 
%\begin{equation*}\begin{array}{ll}
$\displaystyle \overline{P}^c_tf(x,y)=\int_{\mathbb{R}^{1+d}} \overline{p}^c(t,x,y,x',y')f(x',y')dx'dy'$.
%$\\
%\end{array}
%\end{equation*}
}
We now state in Theorem \ref{thm:approx} below the main result of this section.

\begin{thm}
\label{thm:approx}
Assume $c$ is three times continuously differentiable  and that $||c'(y)||\vee ||c''(y)|| \vee ||c'''(y)||\le C \left(1+||y||^2\right)$.
Let $f\in C_b(\R\times\R^d)$ such that $f(\cdot,y')$, $\frac{\partial f}{\partial x}(\cdot,y')$, $\frac{\partial^2 f}{\partial x^2}(\cdot,y')$ and $\frac{\partial^3 f}{\partial x^3}(\cdot,y')$ are square integrable for all $y'$.  
Let $(x,y)$ s.t. $c'(y)\neq 0$ and assume that for any polynomial $C(y,y')$ up to order $8$ one has
\begin{equation}\label{bound}
 \int_{\R^d} |C(y,y')|(||f(\cdot,y')||_2+||\frac{\partial f}{\partial x}(\cdot,y')||_2+||\frac{\partial^2 f}{\partial x^2}(\cdot,y')||_2 + ||\frac{\partial^3 f}{\partial x^3}(\cdot,y')||_2)p_t(y'-y)dy' 
<\infty.\end{equation}
Then we have, for $t\downarrow 0$,
$$
\big| P^c_tf(x,y)-\overline{P}^c_tf(x,y) \big|=\mathcal{O}(t^{\frac32}).
$$
\end{thm}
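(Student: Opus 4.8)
The plan is to bypass the unproven Conjecture \ref{conj:1} entirely and work directly at the level of semigroups, using the Fourier-side estimate \eqref{eq:new} (which \emph{is} rigorously established in Steps 1--3) together with Plancherel's theorem in the $x$-variable. Concretely, fix $(x,y)$ with $c'(y)\neq 0$ and write
$$
P^c_tf(x,y)-\overline{P}^c_tf(x,y)=\int_{\R^d}\Big(\int_{\R}\big[p^c(t,x,y,x',y')-\overline p^c(t,x,y,x',y')\big]f(x',y')\,dx'\Big)dy'.
$$
For fixed $y'$ the inner integral is an $L^2$-pairing in $x'$, so by Plancherel it equals (up to $2\pi$) the pairing of the Fourier transforms in $x'$; and the Fourier transform in $x'$ of a kernel depending on $x'$ only through $x-x'$ reduces to multiplication. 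Thus the inner integral becomes $\frac{1}{2\pi}\int_{\R}e^{i\gamma x}\big(\widehat{p^c}-\widehat{\overline p^c}\big)(t,\gamma,y,x',y')\,\overline{\widehat f(\cdot,y')}\,d\gamma$ after the usual bookkeeping with conjugates and signs. The point is that \eqref{eq:new} gives, after subtracting the Fourier transform $\widehat q$ (which matches $\widehat{\overline p^c}$ up to the explicit $H$-correction that is exactly $\widehat{\overline p^c}-\widehat q$), a pointwise-in-$\gamma$ bound
$$
\big|\widehat{p^c}(t,\gamma,y,x',y')-\widehat{\overline p^c}(t,\gamma,y,x',y')\big|\le \big(|\gamma|\sqrt t + \gamma^2 t\big)\,C(y,y')\,p_t(y-y'),
$$
with $C(y,y')$ a polynomial up to order $8$, uniformly for $t$ in a neighbourhood of $0$.

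The next step is to convert the $|\gamma|$ and $\gamma^2$ prefactors into derivatives of $f$. Since $\widehat{(\partial_x^k f)}(\gamma)=(i\gamma)^k\widehat f(\gamma)$, each factor of $\gamma$ in the Fourier estimate can be moved onto $f$: the $\gamma\,\mathcal O(\sqrt t)$ contribution is controlled, via Cauchy--Schwarz in $\gamma$, by $\sqrt t\,\|\partial_x f(\cdot,y')\|_2$ against the $L^2(\gamma)$-norm of the (Gaussian-in-$\gamma$, hence finite) envelope coming from $\widehat q$ and its $H$-correction; likewise the $\gamma^2\,\mathcal O(t)$ contribution is controlled by $t\,\|\partial_x^2 f(\cdot,y')\|_2$. (The hypothesis that $f,\partial_x f,\partial_x^2 f,\partial_x^3 f$ are all square-integrable in $x'$ for each $y'$ is exactly what makes these pairings finite; the third derivative is needed because one should really expand one order further to see that the leading $\mathcal O(\sqrt t)$ term in \eqref{eq:new} is itself odd in $\gamma$ or otherwise contributes at order $t^{3/2}$ rather than $\sqrt t$ — see below.) This produces, for each $y'$, a bound of the form $t^{3/2}\,C(y,y')\,p_t(y-y')\cdot\big(\|f(\cdot,y')\|_2+\cdots+\|\partial_x^3 f(\cdot,y')\|_2\big)$, and then integrating in $y'$ and invoking the integrability hypothesis \eqref{bound} gives $|P^c_tf(x,y)-\overline P^c_tf(x,y)|=\mathcal O(t^{3/2})$.

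The main obstacle — and the reason the third-derivative hypothesis appears — is that the crude reading of \eqref{eq:new} only delivers $\mathcal O(\sqrt t)$ for the difference of \emph{kernels}, whereas the theorem claims $\mathcal O(t^{3/2})$ for the difference of \emph{semigroups}; one gains a full factor of $t$ from the averaging against $f$. To make this precise I would split the $\gamma\,\mathcal O(\sqrt t)$ remainder in \eqref{eq:new} according to Remark \ref{rem-O}: the genuine $\mathcal O(\sqrt t)$ piece from $I_2$ is (after the Taylor expansion $F_{y,y',\mathbf b(\cdot)}(\sqrt t)=F_{y,y',\mathbf b(\cdot)}(0)+\sqrt t\int_0^1 F'_{y,y',\mathbf b(\cdot)}(z\sqrt t)dz$ in Step 3) an \emph{odd} function of $\gamma$ multiplied by a Gaussian in $\gamma$, so upon pairing with $\widehat f$ and using that $\int e^{i\gamma x}(\text{odd}\times\text{Gaussian})\,d\gamma$ is purely imaginary/antisymmetric, the $t^{1/2}$-order contribution either cancels or is absorbed by one more factor of $\gamma$ (hence $\partial_x$) and one more factor of $\sqrt t$; the $\gamma^2\,\mathcal O(t)$ piece from $I_1$, bounded in \eqref{cota8}, already needs two $\gamma$'s, i.e.\ $\partial_x^2$, and carries $t$, and refining it by one more Taylor order in $F$ upgrades $t$ to $t^{3/2}$ at the cost of $\partial_x^3$. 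Carrying out this bookkeeping carefully — tracking exactly which monomials in $\gamma$ pair against which derivative of $f$ and collecting the powers of $t$ — is the technical heart of the proof; the rest is Plancherel, Cauchy--Schwarz in $\gamma$, and dominated convergence in $y'$ justified by \eqref{bound} and the integrability of $\|\mathbf b\|_\infty^\alpha$.
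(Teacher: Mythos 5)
Your central quantitative claim misreads \eqref{eq:new}. The left-hand side of \eqref{eq:new} is $\tfrac1t\big(\widehat{p^c}-\hat q\big)$, not the difference of kernels itself; since the $H$-term on the right is, by construction of $\overline{p}^c$ in \eqref{eq:p-bar}, exactly $\tfrac1t\big(\widehat{\overline{p}^c}-\hat q\big)$, the kernel-level remainder is already $t\cdot\big(\gamma\,\mathcal{O}(\sqrt t)+\gamma^2\,\mathcal{O}(t)\big)=|\gamma|\,\mathcal{O}(t^{3/2})+\gamma^2\,\mathcal{O}(t^{2})$ times a polynomial in $(y,y')$ times $p_t(y'-y)$ --- not the bound $\big(|\gamma|\sqrt t+\gamma^2 t\big)C(y,y')p_t(y-y')$ you state. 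Because you dropped this explicit factor of $t$, you conclude that a full power of $t$ must be ``gained from averaging against $f$'', and you then build an elaborate mechanism to recover it: an alleged oddness in $\gamma$ of the $I_2$-remainder, a cancellation that ``either cancels or is absorbed'', and an extra Taylor order in $F$ paid for by $\partial_x^3 f$. None of this is needed, and none of it is justified: Steps 2--3 only control the remainders in modulus (via \eqref{cota8} and \eqref{cota9}), no parity structure in $\gamma$ is established anywhere, and the third-derivative hypothesis in the theorem plays a different role --- it is used, through the Cauchy--Schwarz trick of \eqref{eq:dom}, to guarantee $\widehat{\frac{\partial^2 f}{\partial x^2}}(\cdot,y')\in\mathbb{L}^1(\R)$ so that Fubini and the inverse Fourier transform of the $\gamma^2$-remainder are legitimate, not to buy an extra half-power of $t$. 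As written, your argument either stalls at $\mathcal{O}(\sqrt t)$ or rests on an unproved cancellation; this is a genuine gap.

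Once the power of $t$ is corrected, the rest of your plan (Fourier transform in $x$, moving each factor of $\gamma$ onto a derivative of $f$ via $\widehat{\phi'}(\gamma)=i\gamma\hat\phi(\gamma)$, Cauchy--Schwarz in $\gamma$, then integration in $y'$ under \eqref{bound}) is essentially the proof in the paper: there one writes $\hat u$ through the Feynman--Kac representation, introduces the frozen solution $\hat u_{fr}$, applies \eqref{eq:new} to obtain \eqref{thm:eq}, inverts the Fourier transform with Fubini justified by the $\mathbb{L}^1(\gamma)$ controls on $\hat f$, $\widehat{\partial_x f}$, $\widehat{\partial^2_x f}$, and then the remainders contribute $t\cdot\mathcal{O}(\sqrt t)=\mathcal{O}(t^{3/2})$ and $t\cdot\mathcal{O}(t)=\mathcal{O}(t^{2})$ directly. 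So: restore the factor $t$, delete the parity/extra-expansion machinery, and replace it by the Fubini justification; with those repairs your Plancherel-pairing formulation is a legitimate (and essentially identical) variant of the paper's argument.
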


\begin{proof}
%The result follows straightforwardly from \eqref{pre:thm}.
Denote $\hat f(\gamma,y')=\int_\R e^{-i\gamma x}f(x,y')dx$. We first check that for any 
$y'\in\R^d$ we have $\hat f(\cdot,y')\in \mathbb L^1(\R)$. Indeed we have that
\begin{equation}
\label{eq:dom}
\begin{array}{rcl}
\int_\R|\hat f(\gamma,y')|d\gamma & \le &  (\int_\R(1+\gamma^2)|\hat f(\gamma,y')|^2d\gamma)^{\frac12}(\int_\R(1+\gamma^2)^{-1}d\gamma)^{\frac12}\\\\
& \le &  C (||f(\cdot,y')||_2+||\frac{\partial f}{\partial x}(\cdot,y')||_2)<\infty.\end{array}
\end{equation}
In the same manner we can show that $\hat g(\cdot,y')\in \mathbb L^1(\R)$ for $g(\cdot)=\frac{\partial f}{\partial x}(\cdot,y')$ and $g(\cdot)=\frac{\partial^2 f}{\partial x^2}(\cdot,y')$, using our assumption on the partial derivatives of $f$. This will be needed in the sequel to control terms one may find in integrals.

We recall that $u(t,x,y)=P^c_tf(x,y)$ solves
\begin{equation}\label{Kolm5}
  \left\{\begin{array}{l}
\displaystyle \frac{\partial u}{\partial t}=\frac12\Delta_yu+c(y)\frac{\partial u}{\partial x} \\\\
 u(0,x,y)=f(x,y).\, 
\end{array}\right.\end{equation} To solve Equation (\ref{Kolm5}), we take the Fourier transform with respect to the variable $x$, leading to, for any $\gamma$,
$$\left\{
\begin{array}{l}\label{Kolm6}
\displaystyle \frac{\partial \hat u}{\partial t}=\frac12\Delta_y\hat u+i\gamma c(y)\hat u \\\\
 \hat u(0,\gamma,y)= \hat f(\gamma,y).
\end{array}
\right.$$
The solution of this last equation is (see Section \ref{sec:FK})
$$\begin{array}{rcl}
\hat u(t,\gamma,y)& = & \E[e^{i\gamma\int_0^t c(y+W(s))ds} \hat f(\gamma,y+W(t))]\\\\
& = & \int_{\R^d}\E[e^{it\gamma\int_0^1c(y+s(y'-y)+\sqrt t\mathbf b(s))ds}]\hat f(\gamma,y')p_t(y'-y)dy'.\end{array}$$

Let us introduce the {\it frozen solution}
$$\hat u_{fr}(t,\gamma,y)=\int_{\R^d}e^{it\gamma(c(y)+\frac{1}2<c'(y),y'-y>)}e^{-\frac12(\frac1{12}t^3\gamma^2||c'(y)||^2)}\hat f(\gamma,y')p_t(y'-y)dy'.$$
We get, using \eqref{eq:new}: 
%and the notation for $F$ and $H$ (see also Remark \ref{rem-O})
\begin{equation}\label{thm:eq}
\begin{array}{ll}
 \displaystyle \frac{\hat u(t,\gamma,y)- \hat u_{fr}(t,\gamma,y)}t&\\\\
 \displaystyle =\int_{\R^d}e^{it\gamma(c(y)+\frac{1}2<c'(y),y'-y>)}\E[e^{it^{\frac32}\gamma <c'(y),\int_0^1\mathbf b(s )ds>}\frac{(e^{i\gamma t F(y,y',\sqrt t\mathbf b(\cdot))}-1)}t]\hat f(\gamma,y')p_t(y'-y)dy'&\\\\
\displaystyle =\int_{\R^d}e^{it\gamma(c(y)+\frac{1}2<c'(y),y'-y>)}e^{-\frac12(\frac1{12}t^3\gamma^2||c'(y)||^2)}i\gamma H(y',y)\hat f(\gamma,y')p_t(y'-y)dy' & \\\\
\displaystyle \qquad   +\int_{\R^d}\big[ \gamma \mathcal{O}(\sqrt t)+\gamma^2 \mathcal{O}(t)\big]\hat f(\gamma,y')p_t(y'-y)dy'.&\end{array}\end{equation}
%As in Step 3 towards Conjecture \ref{conj:1}, the multiplicative constant in the term $\mathcal{O}\left(\sqrt{t}\right)$ writes as  a polynomial in $y,\,y'$ and a polynomial in $|\gamma|$. 
We now wish to take the inverse Fourier transform in \eqref{thm:eq}. To compute the inverse Fourier transform, we apply Fubini's theorem.

Let us first consider the main term in the right hand side of \eqref{thm:eq}. Using the relation $\widehat{\phi'}(\gamma)=i\gamma\hat{\phi}(\gamma)$, the control \begin{equation}
\label{eq:cont-f1chap}
    \int_\R|\widehat{\frac{\partial f}{\partial x}}(\cdot,y')|d\gamma\leq C (||\frac{\partial f}{\partial x}(\cdot,y')||_2+||\frac{\partial^2 f}{\partial x^2}(\cdot,y')||_2)
    \end{equation}
    (obtained as in \eqref{eq:dom}) and hypothesis \eqref{bound} allows to perform Fubini theorem. Then, using the relation $i\gamma\widehat{\psi}(\gamma)\widehat{\phi}(\gamma)=\widehat{\psi'*\phi}(\gamma)$ with $\widehat\phi(\gamma)=e^{it\gamma(c(y)+\frac{1}2<c'(y),y'-y>)}e^{-\frac12(\frac1{12}t^3\gamma^2||c'(y)||^2)}$ and $\widehat\psi(\gamma)=\hat f(\gamma,y')$, we get that the inverse Fourier transform of the main term is
$$\begin{array}{l}
\int_{\R^{1+d}}\frac2{\sqrt \pi}H(y',y)(\frac 6{ t^3||c'(y)||^2})^{\frac32}((x-x')+t[c(y)+\frac12<c'(y),y'-y>])\\\\
 \qquad \times e^{-\frac{6((x-x')+t[c(y)+\frac12<c'(y),y'-y>])^2}{t^3||c'(y)||^2}}f(x',y')p_t(y'-y)dx'dy'\,. \\
\end{array}$$
In the same manner the inverse Fourier transform of $\hat u_{fr}(t,\gamma,y)$ is equal to
$$
\int_{\R^{1+d}}\frac{\sqrt6e^{-6\frac{((x-x')+t[c(y)+\frac12<c'(y),y'-y>])^2}{t^3||c'(y)||^2}}}{\sqrt{\pi}||c'(y)||t^{\frac32}}\,f(x',y')p_t(y'-y)dx'dy'.
$$
We then compute the inverse Fourier transform of the remainder terms in \eqref{thm:eq}. Let us for example consider the term 
$$\int_{\R^d} \gamma \mathcal{O}(\sqrt t)\hat f(\gamma,y')p_t(y'-y)dy'=\int_{\R^d}  \frac 1 i \mathcal{O}(\sqrt t)i\gamma\hat f(\gamma,y')p_t(y'-y)dy'.$$
We recall that
the term $\mathcal{O}\left(\sqrt{t}\right)$ writes as a polynomial in $y,y'$, up to order $8$ (see Remark \ref{rem-O}). Using once more $\widehat{\phi'}(\gamma)=i\gamma\widehat{\phi}(\gamma)$, Eq. \eqref{eq:cont-f1chap} and \eqref{bound}, one may again apply Fubini's theorem and see that the inverse Fourier transform is controlled by
$$\int_{\R^d}  \frac 1 i \mathcal{O}(\sqrt t)C (||\frac{\partial f}{\partial x}(\cdot,y')||_2+||\frac{\partial^2 f}{\partial x^2}(\cdot,y')||_2) p_t(y'-y)dy'.$$
Using again \eqref{bound} allows to perform dominated convergence and to see that this term behaves as $\mathcal \mathcal{O}(\sqrt{t})$. We may proceed in the same manner for the other part of the remainder term and see that it behaves as $\mathcal \mathcal{O}(t)$.

To sum up we get
$$\begin{array}{ll}
u(t,x,y)=\int_{\R^{1+d}}\frac{\sqrt6e^{-6\frac{((x-x')+t[c(y)+\frac12<c'(y),y'-y>])^2}{t^3||c'(y)||^2}}}{\sqrt{\pi}||c'(y)||t^{\frac32}}\,f(x',y')p_t(y'-y)dx'dy'
&\\\\
\qquad \qquad \qquad-t\int_{\R^{1+d}}\frac2{\sqrt \pi}H(y',y)(\frac 6{ t^3||c'(y)||^2})^{\frac32}((x-x')+t[c(y)+\frac12<c'(y),y'-y>])\\\\
\qquad \qquad \qquad \qquad \qquad e^{-\frac{6((x-x')+t[c(y)+\frac12<c'(y),y'-y>])^2}{t^3||c'(y)||^2}}f(x',y')p_t(y'-y)dx'dy'+\mathcal{O}(t^{\frac32})\\
\hspace{1.5cm}=\int_{\R^{1+d}}\bar{p}^c(t,x,y,x',y')dx'dy'+\mathcal{O}(t^{\frac32}).&
\end{array}$$
%The main term in the expansion is obtained thanks to Eq. \eqref{eq:dom}.
%The remainder term is obtained using the bounds \eqref{cota8}, \eqref{cota9}. Assumption (\ref{bound}), allowing indeed to perform dominated convergence. The multiplicative constant in the term $\mathcal{O}(t^{\frac32})$ depends on $x$ and $y$.
\end{proof}

\begin{Remark}
\label{rem:y-zero}
If we consider a point $(x,y)$ with $c'(y)=0$ the approximated kernel $\bar{p}^c$ cannot be defined by \eqref{eq:p-bar}. It is however possible to perform in this case an ad hoc Taylor expansion.
\end{Remark}

\noindent {\bf Example 1:}
As an example, let us study the result of the conjecture \eqref{conconj} in the special case $c(y)=-\beta||\Omega^{\frac12}y||^2$ for $\beta=i$ or $\beta=-1$. Then
$c'(y)=-2\beta\Omega y$ and $c''(y)=-2\beta\Omega$. Furthermore, $H(y',y)=-\frac13 \beta||\Omega^{\frac12}(y-y')||^2$.
Then

$$\begin{array}{l}p^{-\beta||\Omega^{\frac12}\cdot||^2}(t,x,y,x',y')= \frac{\sqrt{12}}{(2\pi)^{\frac{d+1}{2}}} \frac{1}{2^3\|\Omega y\|^3 t^{\frac{7+d}{2}}}\,\exp{- \big(\frac{\|y-y'\|^2}{2t}+\frac{6\left(x-x'-\beta<\Omega y,y'-y>)\right)^2}{4t^3||\Omega y||^2}\big)} \times\\
\displaystyle \quad\big[4t^2\|\Omega y\|^2-12\big( x-x'-t(\beta( \|\Omega^{\frac12}y\|^{2}+<\Omega y,y'-y>)\big)\big(-\frac13\beta\|\Omega^{\frac12}(y'-y)\|^2+\mathcal{O}\big(\sqrt{t}\big)\big)\big].
\end{array}$$
{\color{black}
\noindent {\bf Example 2:} Consider the system
\begin{equation}\label{hypo2}
\begin{cases}
dZ_1(t)  =  Z_2(t)dt \\
Z_2(t)  = \varphi(W(t)) 
\end{cases} 
\end{equation}
with the function $\varphi$ satisfying the same assumptions as $c$ in 
Theorem \ref{thm:approx}, and being invertible with smooth $\varphi^{-1}$.

Note that using Itô formula, \eqref{hypo2} can be rewritten as
\begin{equation*}
\begin{cases}
dZ_1(t)  =  Z_2(t)dt \\
dZ_2(t)  = \varphi'\circ\varphi^{-1}(Z_2(t))dW(t)+\frac12\varphi''\circ\varphi^{-1}(Z_2(t))dt.
\end{cases} 
\end{equation*}
Thus if $\varphi'$, $\varphi''$ are bounded and $|\varphi'|^2$ is uniformly strictly elliptic one sees that $(Z_1,Z_2)$ is an hypoelliptic diffusion.

Consider the function $\Phi(x,y)=(x,\varphi(y))$ and set
$(X,Y)=\Phi^{-1}(Z_1,Z_2)$. It is clear that $(X,Y)$ solves the~SDE
\begin{equation*}
\begin{cases}
dX(t)  =  \varphi(Y_t)dt \\
dY(t)  =  dW(t).
\end{cases} 
\end{equation*}
Denoting $p^\varphi(t,x,y,x',y')$ the transition function of 
$(X,Y)$ and using the change of variable $(Z_1,Z_2)=\Phi(X,Y)$, we get that the transition function of $(Z_1,Z_2)$ is given by
$$
p^\varphi(t,z_1,\varphi^{-1}(z_2),z_1',\varphi^{-1}(z_2'))\times
|(\varphi^{-1})'(z_2')|
$$
Thus an approximation of this transition function is given by
$$
\bar{p}^\varphi(t,z_1,\varphi^{-1}(z_2),z_1',\varphi^{-1}(z_2'))\times
|(\varphi^{-1})'(z_2')|
$$
with $\bar{p}^\varphi$ given by formula \eqref{eq:p-bar}.

}

\section{Numerical experiments}\label{sec:num}
The aim of this section is to experiment on a simple example the practical efficiency of the approximation stated in Theorem \ref{thm:approx}.
We consider the hypoelliptic PDE
\begin{equation}
\label{eq:u-quad-test}
    \left\{\begin{array}{lcl}
\displaystyle\frac{\partial u}{\partial t}&=&\displaystyle\frac12\frac{\partial^2 u}{\partial^2 y}+c(y)\frac{\partial u}{\partial x} , \quad
(t,x,y)\in\R_+^*\times\R\times\R\\
u(0,x,y)&=&f(x,y), \quad \forall(x,y)\in\R\times\R .\\
\end{array}\right.
\end{equation}
We recall that the 
 fundamental solution to \eqref{eq:u-quad-test} and the associated semigroup are respectively denoted by~$p^c(t,x,y,x',y')$ and $(P^c_t)$. We recall that 
 \begin{equation}
 \label{eq:feyn-num}
 u(t,x,y)=P^c_tf(x,y)=\int_{\R^2}p^c(t,x,y,x',y')f(x',y')dx'dy'=\E^{x,y}[f(X_t,Y_t)]
 \end{equation}
 with $(X,Y)$ the hypoelliptic diffusion solving Eq. \eqref{hypo1} with $b\equiv 0$.

We aim at checking experimentally the validity of our approximation
 $\overline{P}^c_h$ of $P^c_h$, for small time $h$ (Theorem \ref{thm:approx}).
 
 More precisely the idea is the following. Take $0<T<\infty$, and $N\in\mathbb{N}^*$. Then the solution
 $u(T,x,y)=P^c_Tf(x,y)=P^c_{\frac T N}\circ\ldots P^c_{\frac T N}f(x,y)$
of \eqref{eq:u-quad-test} should be approached by 
\begin{equation}
\label{eq:approN}
    \overline{P}^c_{\frac T N}\circ\ldots \overline{P}^c_{\frac T N}f(x,y).
\end{equation}
Note that we could simply try to approach $P^c_Tf(x,y)$ by $\overline{P}^c_Tf(x,y)$ (i.e. take $N=1$ in~\eqref{eq:approN}).
But this would be valid only for small $T$. For large $T$ one can expect that the approximation would be better if we iterate $N$ times the approximated semigroup $\overline{P}^c_{\frac T N}$ (as~$T/N$ is small for large $N$). Also one wishes to check that the errors will not accumulate by iterating the approximated semigroup $\overline{P}^c_{\frac T N}$.

In our simulation the integral that leads to a quantity of type
\begin{equation}
\label{eq:int}
\overline{P}^c_{\frac T N}\phi(x,y)=\int_{\R^2}\overline{p}^c(\frac T N,x,y,x',y')\phi(x',y')dx'dy'
\end{equation}
is approximated by some quadrature method (see details in Example 1; here $\phi$ maybe the initial condition $f$ or some previous approximation of $\overline{P}^c_{\frac{kT}{N}}f$, $1\leq k\leq N-1$). 

\vspace{0.2cm}
We need benchmarks to which we can compare our small time approximation.

First we use a finite elements methods (space discretization) together with a
Crank-Nicolson scheme (time discretization) to solve the PDE \eqref{eq:u-quad-test}. This method will be referred to as Finite Elements (FE). We use a high discretization order to ensure the FE is at convergence.

Second we use an Euler type scheme with time step $T/n$ for the simulation of paths of $(X,Y)$ solution of \eqref{hypo1} with $b\equiv 0$, starting from $(x,y)$. We draw a large number $M$ of independent realizations of $(X_T,Y_T)$ and compute a Monte Carlo average to approach $u(t,x,y)$ (through the Feynman-Kac representation \eqref{eq:feyn-num}). This method will be referred to as Monte Carlo (MC). We use a high discretization order $n$ and a large number of samples $M$ to ensure the MC is at convergence.

\vspace{0.3cm}

\noindent
{\bf Example 1: toy example.} Here we take $c(y)=\frac 1 4 (-\frac{y^2}{2}+6y)$ and
\begin{equation}
\label{eq:CI-num}
f(x,y)=\frac{1}{2\pi\sigma_c^2}\exp(-\frac{x^2+y^2}{2\sigma_c^2})
\end{equation}
with $\sigma_c^2=0.2$. The time horizon is $T=2.5$.

Note that on our example we have $c'(y)=0$ for $y=6$. But when computing the integral 
\eqref{eq:int} (on even solving the PDE by FE) we use a bounded domain
$K=(-14,14)\times(-5,5)$ which does not intersect with the line $y=6$. Indeed with the initial condition~\eqref{eq:CI-num} the mass remains concentrated at time $T=2.5$ in $K$ and is near to zero at the boundary~$\partial K$. 

In other words we first approach
 $P^c_{T/N}\phi(x,y)=\int_{\R^2}p^c(\frac T N,x,y,x',y')\phi(x',y')dx'dy',$ by\\ $\int_{K}p^c(\frac T N,x,y,x',y')\phi(x',y')dx'dy'$ and consider that on $K$ the kernel $p^c$ is correctly approached by $\overline{p}^c$ (Conjecture \ref{conj:1} or Theorem~\ref{thm:approx}). Then the approximation by quadrature of
$\int_{K}\overline{p}^c(\frac T N,x,y,x',y')\phi(x',y')dx'dy'$ should be a correct approximation of $P^c_{T/N}\phi(x,y)$, allowing the computation of $u(T,x,y)$.

\vspace{0.2cm}
Table \ref{tab:1} shows the relative $L^p$-error, for $p=1,2,\infty$, between the FE reference solution and 
\eqref{eq:approN} computed with $N=1$ and $N=5$ (in the latter case we then have
$T/N=0.5$). Note that given the discretization grid ${(x_i,y_i)}_{i=1}^{N_x\times N_y}$of the domain $K$ the relative $L^p$-distance between a reference function $f$ and an approximating function $g$ is defined by
$$
\frac{\Big(\sum_{i=1}^{N_x\times N_y}|g(x_i,y_i)-f(x_i,y_i)|^p\Big)^{1/p}}{\Big(\sum_{i=1}^{N_x\times N_y}|f(x_i,y_i)|^p\Big)^{1/p}},
$$
for $p=1,2$ and by
$$
\frac{\max_{i=1}^{N_x\times N_y}|g(x_i,y_i)-f(x_i,y_i)|}{\max_{i=1}^{N_x\times N_y}|f(x_i,y_i)|},
$$
for $p=\infty$.

\vspace{0.5cm}
\begin{table}[h!]
 \begin{center}
\begin{tabular}{cccc}
\hline
 & $N=1$ &  $N=5$    \\
 \hline
$L^1$-error & 0.1257883    &  0.01681095  \\
\hline
$L^2$-error &  0.3115323   &  0.03828152  \\
\hline
$L^\infty$-error &  0.1732925   &  0.01735233  \\
\hline
\end{tabular}
\caption{Relative error between  the FE reference solution and the iterated semigroup \eqref{eq:approN} with $N=1$ and $N=5$}
\label{tab:1} 
\end{center}
\end{table}
As expected the result is much better with $N=5$. To illustrate this we plot several graphs. On Figure \ref{fig1} we present a 3D plot of the reference solution computed by FE.
The solution computed by \eqref{eq:approN} with $N=5$ gives a plot that is very similar to FE, so instead we plot the solution computed with $N=1$ on Figure \ref{fig2}.

\begin{figure}
\begin{center}
\includegraphics[angle=270,width=9cm]{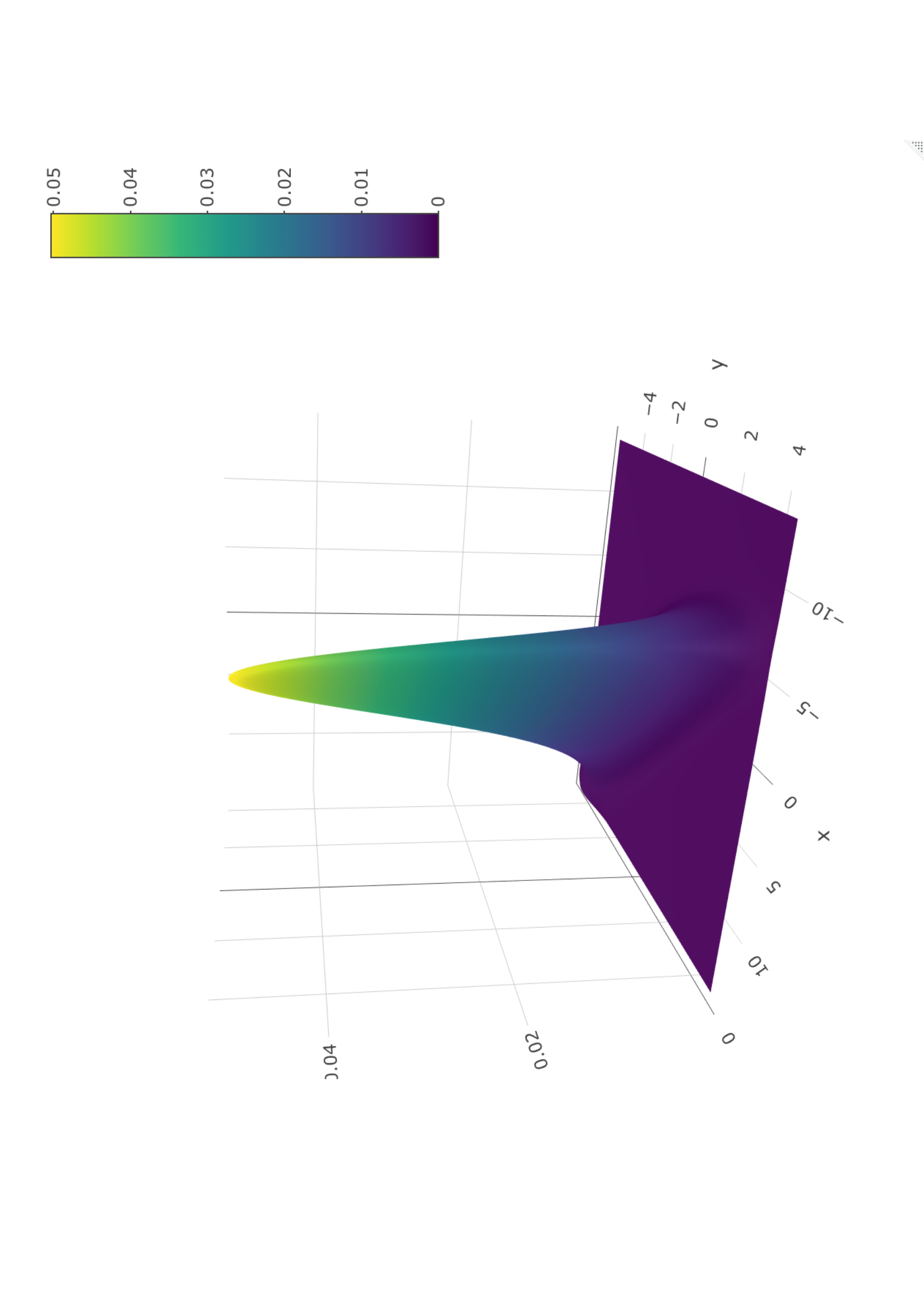}
\caption{ 3D plot of the reference solution computed by FE.}
\label{fig1}
\end{center}

\begin{center}
\includegraphics[angle=270,width=9cm]{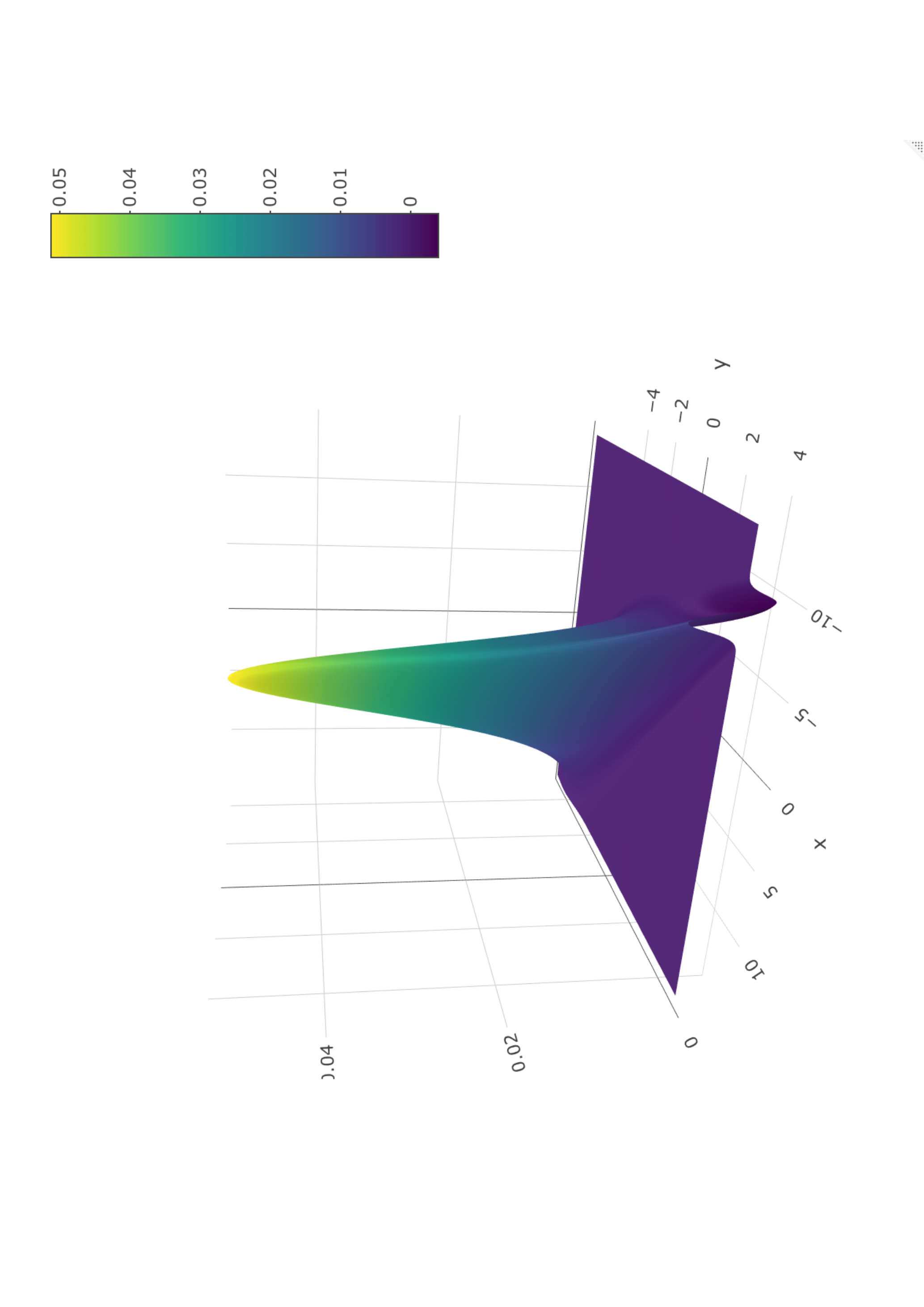}
\caption{ 3D plot of the solution computed by one iteration of the semigroup (i.e. \eqref{eq:approN} with $N=1$).}
\label{fig2}
\end{center}
\end{figure}
Note that the solution computed with $N=1$ shows some instability.
To illustrate this more precisely we plot on Figure \ref{fig3} approximated graphs of the function
$x\mapsto u(T=2.5,x,y=3.74)$ computed with the four methods (FE, MC with $n=10^4$ and $M=5\times 10^5$, and the iterated semigroup \eqref{eq:approN} with $N=1$ and $N=5$). We see that the solutions computed by FE, by MC and by \eqref{eq:approN} with $N=5$  are very close, while the one for $N=1$ is different and obviously presents issues.

\begin{figure}
\begin{center}
\includegraphics[width=9cm]{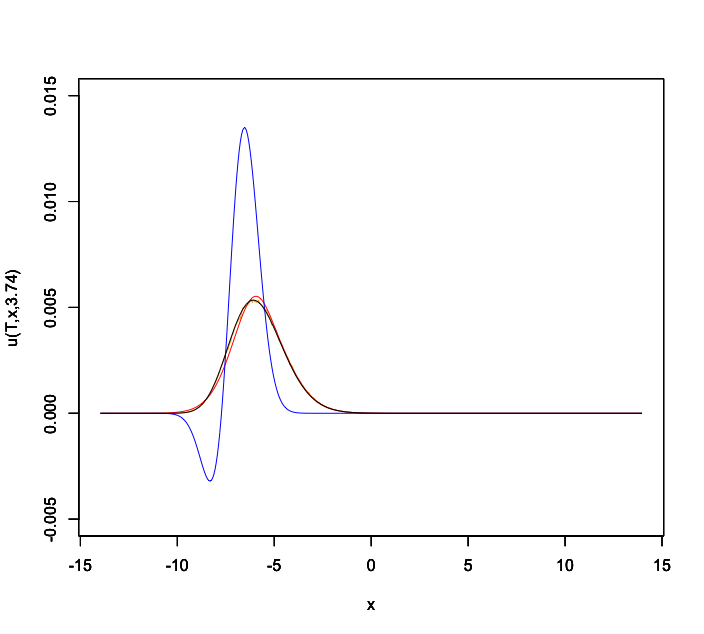}
\caption{ Plot of an approximation of the function $x\mapsto u(T=2.5,x,y=3.74)$, by FE (red), MC (orange), and the iterated semigroup \eqref{eq:approN} with $N=1$ (blue) and $N=5$ (black).}
\label{fig3}
\end{center}
\end{figure}

\section{Appendix}\label{rem:back-for}
In this appendix, we come back to the connection between the solution to a KHE in the backward or forward form and the probability transition function of processes governed by SDEs. As mentioned in the introduction, the transition probability function of the process described by \eqref{hypo1} is the solution to the Kolmogorov (hypoelliptic) equation in the backward form, that is \eqref{Kolm-back}. This is because the arrival point $(x',y')$ is fixed and we see
$p(t,x,y,x',y')$ as a function of time $t$ and starting point $(x,y)$.

If now we fix the starting point $(x,y)$, it is well known that $p(t,x,y,x',y')$ solves, as a function of the arrival point $(x',y')$ and time $t$, the KHE in the forward form
\begin{equation}\label{Kolm-for}
\left\{\begin{array}{rcl}
\partial_tp(t,x,y,x',y')&=&\frac12\Delta_{y'}p(t,x,y,x',y')-\sum_{i=1}^d\partial_{y_i'}[b_i(y') p(t,x,y,x',y')] \\
&&-<c(y'),\nabla_{x'}p(t,x,y,x',y')>,
 (t,x',y')\in\R_+^*\times\R^{\tilde d}\times\R^d\\
p(0,x,y,x',y')&=& \delta_{x}(x')\otimes\delta_{y}(y'),\quad (x',y')\in\R^{\tilde d}\times\R^d.
\end{array}\right.
\end{equation}
This is because the formal adjoint of the generator 
$$\cL=\frac12\Delta_y+<b(y),\nabla_y>+<c(y),\nabla_x>$$
of \eqref{hypo1} is defined by
$$
\cL^*f(x',y')=\frac12\Delta_{y'}f(x',y')-\sum_{i=1}^d\partial_{y_i'}[b(y')f(x',y')]-<c(y'),\nabla_{x'}f(x',y')>
$$
(see \cite[Chapter 5]{friedman-eds}; this is a consequence of Th. 5.4.7 which can be adapted to the hypoelliptic case). 
%Note that in \eqref{Kolm-for} we have denoted the initial condition
%by $\delta_{x}(x')\otimes\delta_{y}(y')$ (equal to $\delta(x'-x)\otimes\delta(y'-y)$) to stress that the variable is 
%$(x',y')$. But its meaning is again $\lim_{t\downarrow 0}\int p(t,x,y,x',y')f(x',y')dx'dy'=f(x,y)$.
Another point of view is the following. Assume for simplicity that $b\equiv 0$. Equation~\eqref{Kolm-back} is the KHE in the backward form associated to the SDE described by
\eqref{hypo1}, but it also corresponds to the KHE in the forward form associated to
\begin{eqnarray}\label{hypo1-bis}
\begin{cases}
dX(t)=-c(Y(t))dt \\
dY(t)= dW(t).
\end{cases}
\end{eqnarray}
More precisely if for any $(x',y')$ one gets the solution $(t,x,y)\mapsto p(t,x,y,x',y')$ to \eqref{Kolm-back},
this defines a kernel $p(t,x,y,x',y')$ which is the transition function of \eqref{hypo1}; but by
setting $p^*(t,x',y',x,y):=p(t,x,y,x',y')$ one defines a new kernel which is the transition function of 
\eqref{hypo1-bis}. This is because the formal adjoint of the generator 
$\cL^*=\frac12\Delta_{y'}-<c(y'),\nabla_{x'}>$ of \eqref{hypo1-bis} is
$\cL=\frac12\Delta_{y}+<c(y),\nabla_{x}>$ (or by using directly \cite[Theorem~5.4.7]{friedman-eds}).

To sum up, solving \eqref{Kolm-back} with $b \equiv 0$ allows the computation of the transition function of \eqref{hypo1} (with $b\equiv 0$) or 
of \eqref{hypo1-bis}. But in these notes we will focus on the transition of \eqref{hypo1}, as this will be more coherent with our probabilistic computations.

\section{Acknowledgments}
This work was made with the support of the MathamSud FANTASTIC 20-MATH-05 project. The authors also thank the anonymous referees and the AE for valuable comments.

\end{document}